\theoremstyle{plain}
\newtheorem{theorem}{Theorem}[section]
\newtheorem{lemma}[theorem]{Lemma}
\newtheorem{proposition}[theorem]{Proposition}
\theoremstyle{definition}
\theoremstyle{remark}
\newtheorem{remark}[theorem]{Remark}
\newcommand{\ic}{\ensuremath{\mathcal{I}}}
\newcommand{\oc}{\ensuremath{\mathcal{O}}}
\newcommand{\ec}{\ensuremath{\mathcal{E}}}
\newcommand{\nc}{\ensuremath{\mathcal{N}}}
\newcommand{\lag}{\langle}
\newcommand{\rag}{\rangle}
\newcommand{\Pt}{\mathbb{P}^3}
\newcommand{\Pq}{\mathbb{P}^4}
\newcommand{\Pn}{\mathbb{P}^n}
\newcommand{\bZ}{\mathbb{Z}}
\begin{document}

\title[Rank two globally generated vector bundles with $c_1\leq 5$.]{Rank two globally generated vector bundles with $c_1\leq 5$.}

\author{Chiodera L.}

\address{Dipartimento di Matematica, via Machiavelli 35, 44100 Ferrara (Italy)}
\email{ludovica.chiodera@unife.it}

\subjclass[2010] {14F05, 14M15} \keywords{Vector bundles, rank two, globally generated, projective space.}

\author{Ellia Ph.}
\address{Dipartimento di Matematica, via Machiavelli 35, 44100 Ferrara (Italy)}
\email{phe@unife.it}

\maketitle

\thispagestyle{empty}

\begin{abstract} We classify globally generated rank two vector bundles on $\Pn$, $n \geq 3$, with $c_1 \leq 5$. The classification is complete but for one case ($n=3$, $c_1=5$, $c_2=12$).
\end{abstract}

\section*{Introduction.} Vector bundles generated by global sections are basic objects in projective algebraic geometry. Globally generated line bundles correspond to morphisms to a projective space, more generally higher rank bundles correspond to morphism to (higher) Grassmann varieties. For this last point of view (that won't be touched in this paper) see\cite{SU-0} \cite{SU}, \cite{Suk}. Also globally generated vector bundles appear in a variety of problems (\cite{FM} just to make a single, recent example).
\par 
In this paper we classify globally generated rank two vector bundles on $\Pn$ (projective space over $k$, $\overline k = k$, $ch(k)=0$), $n \geq 3$, with $c_1 \leq 5$. The result is:

\begin{theorem}
\label{ThmFin}
Let $E$ be a rank two vector bundle on $\Pn$, $n \geq 3$, generated by global sections with Chern classes $c_1,c_2$, $c_1 \leq 5$.
\begin{enumerate}
\item If $n \geq 4$, then $E$ is the direct sum of two line bundles
\item If $n=3$ and $E$ is indecomposable, then $$(c_1,c_2) \in S = \{((2,2),(4,5),(4,6),(4,7),(4,8),(5,8),(5,10),(5,12) \}.$$ If $E$ exists there is an exact sequence: $$0 \to \oc \to E \to \ic _C(c_1) \to 0\,\,(*)$$ where $C \subset \Pt$ is a smooth curve of degree $c_2$ with $\omega _C(4-c_1) \simeq \oc _C$. The curve $C$ is \emph{irreducible}, except maybe if $(c_1,c_2)=(4,8)$: in this case $C$ can be either irreducible or the disjoint union of two smooth conics.
\item For every $(c_1,c_2) \in S$, $(c_1,c_2) \neq (5,12)$, there exists a rank two vector bundle on $\Pt$ with Chern classes $(c_1,c_2)$ which is globally generated (and with an exact sequence as in (2)).
\end{enumerate}
\end{theorem}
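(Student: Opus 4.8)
All three parts hinge on the Serre correspondence. Since $E$ is globally generated of rank two, a general section $s$ vanishes in pure codimension two (a divisorial component would split off a line bundle factor of $\det E=\oc(c_1)$, and smoothness of the zero locus follows from Bertini because $E$ is spanned), giving a smooth $Y\subset\Pn$ of dimension $n-2$ with
\[
0\to\oc\to E\to\ic_Y(c_1)\to 0,\qquad \omega_Y\cong\oc_Y(c_1-n-1),
\]
the second isomorphism from adjunction and $\det(E|_Y)\cong\oc_Y(c_1)$; in particular $\ic_Y(c_1)$ is globally generated and $Y$ is subcanonical. The key reduction I would isolate first is the criterion that \emph{$E$ splits if and only if $Y$ is a complete intersection}: one direction is clear (the zero locus of a section of $\oc(a)\oplus\oc(b)$ is a complete intersection of type $(a,b)$), and for the converse, when $Y$ is a complete intersection of type $(a,b)$ with $a+b=c_1$ and $a,b\geq 1$, a Koszul computation gives $\mathrm{Ext}^1(\ic_Y(c_1),\oc)\cong k$, so the only locally free extension in $(*)$ is the split one $\oc(a)\oplus\oc(b)$. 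Thus each part becomes a question about when the subcanonical $Y$ is, or is not, a complete intersection.

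\textbf{Part (1): splitting for $n\geq 4$.} Here $\dim Y=n-2\geq 2$, and I would show $Y$ is always a complete intersection. Global generation of $\ic_Y(c_1)$ together with $c_1\leq 5$ bounds $c_2=\deg Y$, leaving finitely many numerical types. The decisive step is the case $n=4$: a smooth surface $Y\subset\Pq$ with $\omega_Y\cong\oc_Y(c_1-5)$ has $c_1-5\leq 0$, so $-K_Y$ is either trivial ($c_1=5$, a polarized $K3$-type surface) or a positive multiple of the hyperplane class ($c_1\leq 4$, forcing the anticanonical embedding, hence a del Pezzo surface). In each case the low degree pins $Y$ down — e.g. $c_1=4$ forces the degree-$4$ del Pezzo, the complete intersection of two quadrics, and $c_1=5$ the $(2,3)$ complete intersection $K3$ — and one checks these are arithmetically Cohen--Macaulay, hence (being subcanonical) arithmetically Gorenstein, hence complete intersections. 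For $n\geq 5$ I would descend by general hyperplane sections, which preserve codimension, subcanonicity and the degree bound, and propagate the resulting vanishing $H^1(\Pn,E(t))=0$ back up the tower. I expect the enumeration of the low-degree subcanonical surfaces in $\Pq$ and the verification that each is a complete intersection to be the main obstacle of this part.

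\textbf{Part (2): the list $S$ and irreducibility for $n=3$.} Now $Y=C$ is a smooth curve and subcanonicity reads $2g-2=(c_1-4)c_2$, which fixes $g$ from $(c_1,c_2)$. Global generation of $\ic_C(c_1)$ is the real constraint: it forces $C$ to lie on, and be scheme-theoretically cut out (off finitely many directions) by, surfaces of degree $c_1$, and this together with Castelnuovo--Halphen genus bounds for space curves of degree $c_2$ caps $c_2$. Intersecting the rigid genus formula with the genus bound leaves only finitely many admissible pairs; the small values $c_1\in\{1,3\}$ produce only complete intersections (so $E$ splits), $c_1=2$ leaves exactly the twisted null-correlation bundle $(2,2)$ (whose $C$ is a pair of skew lines, $\omega_C\cong\oc_C(-2)$), and $c_1\in\{4,5\}$ yield the remaining entries of $S$. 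For irreducibility I would examine each pair: a reducible or non-reduced $C$ satisfying $\omega_C\cong\oc_C(c_1-4)$ with spanned ideal must decompose into subcanonical pieces of smaller degree, and a Chern-class and linkage count shows this is numerically possible only for $(c_1,c_2)=(4,8)$, where $C$ may degenerate into two disjoint subcanonical components of degree $4$.

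\textbf{Part (3): existence.} For each $(c_1,c_2)\in S\setminus\{(5,12)\}$ I would run the Serre construction backwards. The plan is to exhibit a smooth curve $C\subset\Pt$ of degree $c_2$ with $\omega_C\cong\oc_C(c_1-4)$ — for $c_1=4$ a curve with trivial canonical bundle (an elliptic curve of degree $c_2$), for $c_1=5$ a curve with $\omega_C\cong\oc_C(1)$ of degree $c_2$ and genus $1+c_2/2$ — lying on enough surfaces of degree $c_1$. Since $\omega_C\cong\oc_C(c_1-4)$ one has $\mathrm{Ext}^1(\ic_C(c_1),\oc)\cong H^0(\oc_C)^{*}=k$, and the Cayley--Bacharach condition for $\omega_C$ holds, so the nontrivial extension yields a locally free $E$ with the prescribed Chern classes sitting in $(*)$. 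The remaining, and in my view decisive, point is to verify that $E$ is \emph{globally generated} rather than merely spanned off $C$ — equivalently that $\ic_C(c_1)$ is globally generated and $H^1(\Pt,E)=0$ — which I would read off from the postulation of the chosen $C$. I expect this last verification to be the crux in the higher-degree entries, and it is precisely what is not established for the excluded pair $(5,12)$, where the required degree-$12$, genus-$7$ curve with ideal spanned in degree $5$ is not produced.
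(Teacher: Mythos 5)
Your skeleton (Serre construction, ``$E$ splits iff the zero locus is a complete intersection'', then case analysis on $(c_1,c_2)$) matches the paper's, but three of your key steps have genuine gaps. The most serious is in part (1) for $c_1=5$, $n=4$: a smooth surface $S\subset\Pq$ with $\omega_S\simeq\oc_S$ is \emph{not} forced to be the $(2,3)$ complete intersection K3. The degree of $S$ is $c_2$, which (by restricting $E$ to a hyperplane and using the $\Pt$ classification) can a priori be $8$, $10$ or $12$; the degree-$10$ case is realized by the Horrocks--Mumford abelian surface, which has trivial canonical bundle, is not a K3, and is not a complete intersection. The paper disposes of it by Schwarzenberger's condition (killing $c_2=8$), Decker's classification of stable bundles with normalized $c_2=4$ (identifying $E$ as the Horrocks--Mumford bundle, which one then checks is not globally generated), and the fact that the only smooth surface in $\Pq$ with trivial canonical class is that degree-$10$ abelian surface (killing $c_2=12$). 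Your ``del Pezzo / K3, hence ACM, hence Gorenstein, hence CI'' route cannot see any of this.

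In part (2) your mechanism for capping $c_2$ does not work as stated: Castelnuovo--Halphen bounds are \emph{upper} bounds on the genus, while the subcanonical condition forces $g=1+c_2(c_1-4)/2$, which is small and never violates them. The paper's bound (Lemma \ref{boundc2}) comes from liaison: linking $C$ by a complete intersection of two degree-$c_1$ surfaces to a residual curve $Y$, the liaison sequence shows $\omega_Y(4-c_1)$ is globally generated, hence of non-negative degree, and the liaison formulas for $(d',g')$ then force $c_2\leq(2c_1^3-4c_1^2+2)/(3c_1-4)$. Likewise, your claim that a ``Chern-class and linkage count'' rules out reducible $C$ except for $(4,8)$ is false: numerically, $(4,6)$ could be two disjoint plane cubics, $(5,8)$ two disjoint plane quartics, $(5,10)$ a plane quartic plus a $(2,3)$ curve, etc. These are excluded only by geometric Bezout arguments (every surface of degree $c_1$ containing $C$ must contain the line $\lag P_1\rag\cap\lag P_2\rag$, or the whole plane of a planar component, contradicting global generation of $\ic_C(c_1)$). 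Finally, part (3) as written is a plan rather than a proof: the hard entries $(4,7)$ and $(4,8)$ irreducible require actually producing elliptic curves of degree $7,8$ cut out by quartics, which the paper obtains from Mori's quartic surfaces with Picard lattice $\bZ H\oplus\bZ X$ together with Saint-Donat's base-point-freeness criterion for $|4H-X|$; the remaining cases use explicit liaison constructions and the twisted null-correlation bundle. (A small point in your favor: $H^1(\Pt,E)=0$ is not needed for global generation of $E$ --- surjectivity of $H^0(E)\to H^0(\ic_C(c_1))$ already follows from $H^1(\oc)=0$.)
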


The classification is complete, but for one case: we are unable to say if there exist or not globally generated rank two vector bundles with Chern classes $c_1 =5, c_2 =12$ on $\Pt$.


\section{Rank two vector bundles on $\Pt$.}

\subsection{General facts.}\quad

For completeness let's recall the following well known results:

\begin{lemma}
Let $E$ be a rank $r$ vector bundle on $\Pn$, $n \geq 3$. Assume $E$ is generated by global sections.
\begin{enumerate}
\item If $c_1(E)=0$, then $E \simeq r.\oc$
\item If $c_1(E) =1$, then $E \simeq \oc (1)\oplus (r-1).\oc$ or $E \simeq T(-1)\oplus (r-n).\oc$.
\end{enumerate}
\end{lemma}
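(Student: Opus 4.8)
The plan is to treat the two parts separately, using restriction to lines to pin down the splitting type and then a determinant (respectively quotient) argument to globalize.

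For (1), the first reflex is to restrict to a line $\ell \cong \Pu$: global generation is inherited by restriction, so $E|_\ell \cong \bigoplus_{i=1}^r \oc _\ell(a_i)$ with all $a_i \geq 0$, while $\sum a_i = c_1 = 0$ forces $E|_\ell$ to be trivial on every line. I would then finish directly, without even needing this: since $E$ is globally generated the evaluation $H^0(E) \to E_x$ is onto at every $x$, so I can choose $s_1,\dots,s_r \in H^0(E)$ whose values at a fixed point $x$ form a basis of $E_x$. These assemble into $\phi \colon \oc^r \to E$ with $\det \phi \in H^0(\det E) = H^0(\oc) = k$ a constant, which is nonzero (it is nonzero at $x$), hence nowhere vanishing. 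A bundle map with nowhere-vanishing determinant is an isomorphism, so $E \cong r.\oc$.

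For (2) the same restriction gives splitting type $(1,0,\dots,0)$ on every line, so $E$ is uniform. I would split according to whether $E$ has a sub-line-bundle $\oc(1)$, i.e. whether $H^0(E(-1)) \neq 0$. If it does, pick $0 \neq t \in H^0(E(-1))$, viewed as $t \colon \oc(1) \to E$. On each line $E(-1)|_\ell \cong \oc _\ell \oplus \oc _\ell(-1)^{r-1}$ has $H^0 = k$, so $t|_\ell$ is either identically zero or nowhere zero; hence the zero locus $Z(t)$ is a union of lines, and through any point of $Z(t)$ it must contain every line, forcing $Z(t) = \Pn$ and contradicting $t \neq 0$. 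Thus $Z(t) = \emptyset$, $t$ is a sub-bundle, and the quotient $Q = E/\oc(1)$ is globally generated of rank $r-1$ with $c_1 = 0$, so $Q \cong \oc^{r-1}$ by (1). Since $\mathrm{Ext}^1(\oc^{r-1},\oc(1)) = H^1(\oc(1))^{r-1} = 0$, the sequence splits and $E \cong \oc(1) \oplus (r-1).\oc$.

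The remaining case $H^0(E(-1)) = 0$ is where the tangent bundle appears and is the main obstacle. Here I would exploit the uniform structure: for $x \in \ell$ the unique $\oc _\ell(1)$-summand of $E|_\ell$ cuts out a line $L_{x,\ell} \subset E_x$, and as $\ell$ ranges over the lines through $x$ these should span a rank-$n$ subspace modelled on the fibre of $T(-1)$. Concretely, over the incidence variety $F = \{(x,\ell) : x \in \ell\}$ with projection $p \colon F \to \Pn$ these distinguished sub-line-bundles globalize to a line sub-bundle of $p^{*}E$, and pushing forward and comparing with the Euler sequence $0 \to \oc(-1) \to \oc^{\,n+1} \to T(-1) \to 0$ should produce a nonzero bundle map relating $T(-1)$ and $E$. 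The delicate step, which I expect to be the crux, is to show this map is a split injection with free cokernel, forcing $r \geq n$ and $E \cong T(-1) \oplus (r-n).\oc$ (and, for $r < n$, ruling out this case entirely so that only the first type survives). This is exactly the classification of uniform bundles of splitting type $(1,0,\dots,0)$, which I would either reproduce in this low-degree situation or cite.
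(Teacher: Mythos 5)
Your proposal is correct, and it diverges from the paper's proof in a worthwhile way. The paper's entire argument is: restrict to a line to see that $E$ is uniform of splitting type $(0,\dots,0)$, resp. $(1,0,\dots,0)$, and then quote Van de Ven for (1) and IV.\ Prop.\ 2.2 of Ellia's memoir on uniform bundles for (2). Your determinant argument for (1) is more elementary and more general: it never uses uniformity (nor $n\geq 3$), only that $\det E \simeq \oc$ and that sections generating one fibre assemble into a map $\oc ^r \to E$ whose determinant is a nonzero constant. Likewise, your dichotomy on $h^0(E(-1))$ in (2) disposes of the case $h^0(E(-1))\neq 0$ by a short self-contained argument (a nonzero section of $E(-1)$ is nowhere vanishing by the line-by-line analysis, the quotient is trivial by (1), and $\mathrm{Ext}^1((r-1).\oc ,\oc (1))=0$ splits the extension), all of which checks out. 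The only place where you fall back on the same external input as the paper is the case $h^0(E(-1))=0$: your incidence-variety sketch is precisely the standard proof of the classification of uniform bundles of splitting type $(1,0,\dots,0)$, and you correctly identify the split-injectivity of the comparison map with $T(-1)$ (and the exclusion of this case for $r<n$) as the step that must be reproduced or cited --- the paper simply cites it. In short: same skeleton (splitting type on lines), but you replace the citations by direct proofs everywhere except in the one genuinely hard subcase, which remains a citation in both treatments.
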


\begin{proof} If $L \subset \Pn$ is a line then $E|L \simeq \bigoplus _{i=1}^r \oc _L(a_i)$ by a well known theorem and $a_i \geq 0, \forall i$ since $E$ is globally generated. It turns out that in both cases: $E|L \simeq \oc _L(c_1)\oplus (r-1).\oc _L$ for every line $L$, i.e. $E$ is uniform. Then (1) follows from a result of Van de Ven (\cite{VdV}), while (2) follows from IV. Prop. 2.2 of \cite{E}.
\end{proof}

\begin{lemma}
Let $E$ be a rank two vector bundle on $\Pn$, $n \geq 3$. If $E$ has a nowhere vanishing section then $E$ splits. If $E$ is generated by global sections and doesn't split then $h^0(E) \geq 3$ and a general section of $E$ vanishes along a smooth curve, $C$, of degree $c_2(E)$ such that $\omega _C(4-c_1) \simeq \oc _C$. Moreover $\ic _C(c_1)$ is generated by global sections.
\end{lemma}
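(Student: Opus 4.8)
The plan is to treat the four assertions separately; everything is formal except a Bertini-type smoothness statement, which is where the real work lies.

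First I would dispose of the splitting criterion. A nowhere-vanishing section $s$ exhibits $\oc$ as a subbundle of $E$, so it produces an extension $0 \to \oc \to E \to \oc(c_1) \to 0$ with quotient the determinant line bundle $\det E = \oc(c_1)$. Its class lives in $\mathrm{Ext}^1(\oc(c_1), \oc) = H^1(\Pn, \oc(-c_1))$, which vanishes for $n \geq 2$ by the cohomology of projective space; hence the sequence splits and $E \simeq \oc \oplus \oc(c_1)$. The bound $h^0(E) \geq 3$ is then immediate: global generation gives $h^0(E) \geq 2$, and if equality held the evaluation map $\oc^{\oplus 2} \to E$ would be a surjection of locally free sheaves of equal rank, hence an isomorphism (its kernel is a rank-zero subsheaf of a torsion-free sheaf, so zero), forcing $E \simeq \oc^{\oplus 2}$ and contradicting non-splitting.

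Next I would produce the curve. Since $E$ does not split, the first part tells us no global section is nowhere vanishing, so a general $s$ has nonempty zero scheme $Z(s)$ of codimension at most $2$. To see it is in fact smooth of pure codimension $2$, I would use that global generation makes each evaluation $H^0(E) \to E_x$ surjective, so the sections vanishing at a fixed $x$ form a subspace of codimension $2 = \mathrm{rk}\,E$; forming the incidence variety $\{(s,x) : s(x)=0\} \subset \mathbb{P}(H^0(E)) \times \Pn$ and projecting to $\mathbb{P}(H^0(E))$, generic smoothness in characteristic zero yields a zero scheme $C = Z(s)$ that is smooth of pure codimension $2$ for general $s$. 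For $n=3$ this is a smooth curve, and as the zero locus of a section of a rank-two bundle it represents $c_2(E)$, so $\deg C = c_2(E)$. This Bertini step is the main obstacle: the other assertions are formal, whereas here one must genuinely invoke global generation together with generic smoothness to control the singularities and the codimension of the generic zero locus simultaneously.

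Finally I would read off the remaining statements from the Koszul sequence $0 \to \oc \xrightarrow{s} E \to \ic_C(c_1) \to 0$ attached to $s$. This identifies the normal bundle $N_{C/\Pt} \simeq E|_C$, so $\det N_{C/\Pt} \simeq \oc_C(c_1)$, and adjunction on $\Pt$ (with $\omega_{\Pt} = \oc(-4)$) gives $\omega_C \simeq \oc_C(c_1 - 4)$, i.e. $\omega_C(4-c_1) \simeq \oc_C$. And $\ic_C(c_1)$, being a quotient of the globally generated bundle $E$, is automatically globally generated, which closes the argument.
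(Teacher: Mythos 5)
Your argument is correct and complete: the extension-class computation $\mathrm{Ext}^1(\oc(c_1),\oc)=H^1(\Pn,\oc(-c_1))=0$ for the splitting criterion, the rank count ruling out $h^0(E)=2$, the incidence-variety/generic-smoothness argument for the smoothness and codimension of the general zero locus, and the Koszul-sequence-plus-adjunction computation of $\omega_C$ constitute exactly the standard proof of this statement. The paper records this lemma as well known and supplies no proof of its own, so there is nothing to compare beyond noting that your route is the expected one.
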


\begin{lemma}
Let $E$ be a non split rank two vector bundle on $\Pt$ with $c_1=2$. If $E$ is generated by global sections then $E$ is a null-correlation bundle.
\end{lemma}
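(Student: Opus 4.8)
The plan is to start from Lemma 1.2 (here referred to as the second lemma), which tells us that a non-split globally generated rank two bundle $E$ with $c_1=2$ fits into an exact sequence
\[
0 \to \oc \to E \to \ic_C(2) \to 0,
\]
where $C \subset \Pt$ is a smooth curve of degree $c_2$ with $\omega_C(4-c_1) = \omega_C(2) \simeq \oc_C$. So the first step is to extract information about $C$: the adjunction condition $\omega_C \simeq \oc_C(-2)$ forces $C$ to have degree and genus tightly constrained. By computing degrees of $\omega_C$ restricted to each component, I would argue that $C$ is a disjoint union of lines, and then use the global generation of $\ic_C(2)$ to pin down the geometry further.

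Concretely, the main structural step is to show $c_2 = 1$, i.e. that $C$ is a single line. Here is how I would proceed. Since $\ic_C(2)$ is globally generated, $C$ is cut out scheme-theoretically by quadrics; in particular $C$ cannot be too large. The condition $\omega_C \simeq \oc_C(-2)$ says each connected component $C_i$ of $C$ satisfies $2g_i - 2 = -2\deg(C_i)$, i.e. $g_i = 1 - \deg(C_i)$; since $g_i \geq 0$ this forces $\deg(C_i) = 1$ and $g_i = 0$, so every component is a line. Next I would rule out $c_2 \geq 2$: if $C$ were a disjoint union of two or more lines, the quadrics through $C$ would have to separate them, but two skew lines are not cut out by quadrics in a way compatible with globally generating $\ic_C(2)$ (one checks $h^0(\ic_C(2))$ and the base locus). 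The cleanest route is to observe that a null-correlation bundle has $c_2 = 1$, so I must show exactly $c_2 = 1$ and then identify $E$.

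Once I know $C$ is a single line $L$, the sequence reads $0 \to \oc \to E \to \ic_L(2) \to 0$ with $c_1(E)=2$, $c_2(E)=1$. The normalized bundle $E(-1)$ then has $c_1 = 0$, $c_2 = 0$, and I would check it has no sections (non-splitting of $E$ translates into $h^0(E(-1)) = 0$ by a standard argument on the vanishing locus). A rank two bundle on $\Pt$ with $c_1 = 0$, $c_2 = 1$ that is stable is, by definition (or by the classification of such bundles via Barth--Hulek / the monad description), a null-correlation bundle. So the final step is to verify stability of $E(-1)$: any destabilizing section $\oc \hookrightarrow E(-1)$ would either vanish nowhere, forcing $E$ to split (excluded by hypothesis via the first part of Lemma 1.2), or vanish on a curve incompatible with $c_2 = 0$.

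The hard part will be the combinatorial/geometric step that excludes $c_2 \geq 2$, i.e. proving $C$ is irreducible (a single line) rather than a disjoint union of several lines. The adjunction condition alone only forces the components to be lines; ruling out several skew lines requires genuinely using that $\ic_C(2)$ is globally generated, and a careful count of how many disjoint lines can be simultaneously cut out by quadrics in $\Pt$ without leaving base points. I expect this to reduce to showing that $h^0(\ic_C(2))$ is too small to generate $\ic_C(2)$ once $\deg C \geq 2$, together with the observation that the residual section construction would contradict $c_2 = \deg C$.
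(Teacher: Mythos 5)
There is a genuine error at the heart of your plan: you aim to prove $c_2(E)=1$, i.e.\ that $C$ is a single line, and then to exclude the case of two or more skew lines. This is exactly backwards. A null-correlation bundle twisted to have $c_1=2$ is $\nc (1)$ with $\nc$ the normalized bundle ($c_1=0$, $c_2=1$), and then $c_2(\nc (1))=c_2(\nc)+c_1(\nc)+1=2$; a general section vanishes on a curve of degree $c_2(E)=2$, namely a pair of skew lines. Your claim that ``two skew lines are not cut out by quadrics in a way compatible with globally generating $\ic _C(2)$'' is false: for $C=L_1\sqcup L_2$ disjoint one has $\ic _C(2)=\ic _{L_1}(1)\otimes \ic _{L_2}(1)$, a tensor product of globally generated sheaves, hence globally generated --- and this is precisely the case that occurs. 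Conversely, the case you try to reach, $C$ a single line, must be \emph{excluded}: a line is a complete intersection of type $(1,1)$ with $1+1=c_1$, so the Serre construction gives $E\simeq \oc (1)\oplus \oc (1)$, contradicting the hypothesis that $E$ does not split. Your final step is also internally inconsistent: you propose to verify stability of $E(-1)$ with $c_1=0$, $c_2=0$, but no stable rank two bundle with those Chern classes exists (such a semistable bundle is trivial), which again signals that the intended answer cannot have $c_2(E)=1$.

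The paper's actual argument is short: the components of $C$ are lines (your adjunction computation for this part is fine); from $h^0(E)\geq 3$ one gets $h^0(\ic _C(2))\geq 2$, and since three or more pairwise disjoint lines lie on at most one quadric surface, this forces $\deg C\leq 2$; $\deg C=1$ is impossible because it makes $E$ split, so $\deg C=2$ and $E$ is (a twist of) a null-correlation bundle. So the two cases you planned to keep and to discard must be interchanged, and the mechanism for eliminating the bad case is the splitting criterion for complete intersections, not a failure of global generation.
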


\begin{proof} We have an exact sequence: $0 \to \oc \to E \to \ic _C(2) \to 0$, where $C$ is a smooth curve with $\omega _C(2) \simeq \oc _C$. It follows that $C$ is a disjoint union of lines. Since $h^0(\ic _C(2))\geq 2$, $d(C) \leq 2$. Finally $d(C)=2$ because $E$ doesn't split.
\end{proof}

This settles the classification of rank two globally generated vector bundles with $c_1(E) \leq 2$ on $\Pt$.

\subsection{Globally generated rank two vector bundles with $c_1=3$.}\quad

The following result has been proved in \cite{Suk} (with a different and longer proof).

\begin{proposition}
\label{c1=3n=3}
Let $E$ be a rank two globally generated vector bundle on $\Pt$. If $c_1(E)=3$ then $E$ splits. 
\end{proposition}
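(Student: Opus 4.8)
The plan is to argue by contradiction. Assume $E$ does not split. Since $E$ is globally generated, Lemma~1.2 gives two possibilities for a general section: if it is nowhere vanishing then $E$ splits, contrary to assumption; hence it vanishes along a nonempty smooth curve $C \subset \Pt$ fitting into an exact sequence $0 \to \oc \to E \to \ic_C(3) \to 0$ with $\omega_C(1) \simeq \oc_C$ and with $\ic_C(3)$ globally generated. The whole argument then reduces to showing that global generation of $\ic_C(3)$ is so restrictive that $C$ must be a single smooth conic, in which case $E$ is forced to split — the desired contradiction.

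First I would pin down the geometry of $C$. Writing $C = C_1 \sqcup \cdots \sqcup C_k$ as the disjoint union of its smooth connected components, of degrees $d_i$ and genera $g_i$, the condition $\omega_C \simeq \oc_C(-1)$ reads $2g_i - 2 = -d_i$ on each component. As $d_i \geq 1$, the only integral solution is $g_i = 0$, $d_i = 2$; hence every $C_i$ is a smooth irreducible plane conic, spanning a plane $H_i$, and $c_2 = 2k$.

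The main step, which I expect to carry the whole proof, is to exclude $k \geq 2$ by a base-point argument. Take two of the conics $C_1, C_2$. Since two conics lying in a common plane must meet (Bézout), we have $H_1 \neq H_2$, so $\ell := H_1 \cap H_2$ is a genuine line and $C_2 \cap H_1 = C_2 \cap \ell = \{p,q\}$ is a length-two scheme disjoint from $C_1$. Now restrict an arbitrary cubic $F \in H^0(\ic_C(3))$ to $H_1 \simeq \Ptw$: either $F|_{H_1} \equiv 0$, or $F|_{H_1}$ is a plane cubic containing the irreducible conic $C_1$, hence of the form $q_1 \cdot \ell'$ where $q_1$ is the equation of $C_1$ and $\ell'$ is a line; since $F|_{H_1}$ must also vanish at $p,q \notin C_1$, necessarily $\ell' = \ell$. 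Choosing any point $x \in \ell$ with $x \notin C_1 \cup \{p,q\}$ (so $x \notin C$), one gets $F(x) = 0$ for \emph{every} $F \in H^0(\ic_C(3))$, contradicting global generation. The hard part is precisely this uniform control of the restricted linear system; once it is in place, the same reasoning (taking any second conic) forces $k \leq 1$.

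Finally, with $k = 1$ the curve $C$ is a single conic, i.e. the complete intersection of a plane and a quadric; the Serre construction attached to this complete intersection returns $E \simeq \oc(1) \oplus \oc(2)$, which splits. This contradicts the standing assumption that $E$ is non-split, and therefore $E$ splits, as claimed.
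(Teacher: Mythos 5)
Your proof is correct and follows essentially the same route as the paper: reduce to a general section vanishing along a disjoint union of smooth conics, then use a B\'ezout argument to show every cubic through two disjoint conics contains the line $\langle C_1\rangle\cap\langle C_2\rangle$, contradicting global generation of $\ic_C(3)$, so $C$ is a single conic and $E=\oc(1)\oplus\oc(2)$. The only (immaterial) difference is that you run the B\'ezout argument in the plane $\langle C_1\rangle$, whereas the paper restricts the cubic directly to the line and observes it vanishes on a length-four subscheme there.
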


\begin{proof}
Assume a general section vanishes in codimension two, then it vanishes along a smooth curve $C$ such that $\omega _C\simeq \oc _C (-1)$. Moreover $\ic _C(3)$ is generated by global sections. We have $C = \cup _{i=1}^rC_i$ (disjoint union) where each $C_i$ is smooth irreducible with $\omega _{C_i}\simeq \oc _{C_i}(-1)$. It follows that each $C_i$ is a smooth conic. If $r\geq 2$ let $L=\lag C_1\rag \cap \lag C_2\rag$ ($\lag C_i\rag$ is the plane spanned by $C_i$). Every cubic containing $C$ contains $L$ (because it contains the four points $C_1\cap L$, $C_2\cap L$). This contradicts the fact that $\ic _C(3)$ is globally generated. Hence $r=1$ and $E = \oc (1)\oplus \oc (2)$. 
\end{proof}

\subsection{Globally generated rank two vector bundles with $c_1 = 4$.}\quad

Let's start with a general result:

\begin{lemma}
\label{boundc2}
Let $E$ be a non split rank two vector bundle on $\Pt$ with Chern classes $c_1, c_2$. If $E$ is globally generated and if $c_1 \geq 4$ then:
$$c_2 \leq \frac{2c_1^3-4c_1^2+2}{3c_1-4}.$$
\end{lemma}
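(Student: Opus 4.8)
The plan is to reduce the bound to a single genus inequality via liaison. By the preceding lemmas, since $E$ is non-split and globally generated, a general section gives an exact sequence $0 \to \oc \to E \to \ic _C(c_1) \to 0$ with $C \subset \Pt$ a smooth curve of degree $c_2$ satisfying $\omega _C \simeq \oc _C(c_1-4)$, with $\ic _C(c_1)$ globally generated and $h^0(\ic _C(c_1)) = h^0(E)-1 \geq 2$. Because $\ic _C(c_1)$ is globally generated its base locus is exactly $C$, of dimension one; hence two general surfaces $S,S'$ of degree $c_1$ through $C$ share no surface component and meet in a genuine complete intersection curve $X = S \cap S'$ of degree $c_1^2$. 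This links $C$ to a residual curve $C'$ of degree $c_1^2 - c_2$.

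Next I would apply the liaison genus formula to the pair $(C,C')$, linked by a complete intersection of type $(c_1,c_1)$:
$$p_a(C) - p_a(C') = (c_1-2)(\deg C - \deg C').$$
The subcanonical hypothesis $\omega _C \simeq \oc _C(c_1-4)$ gives $2p_a(C)-2 = c_2(c_1-4)$. Substituting $\deg C = c_2$, $\deg C' = c_1^2 - c_2$ and simplifying yields
$$p_a(C') = c_1^3 - 2c_1^2 + 1 - \frac{c_2(3c_1-4)}{2}.$$
Since $3c_1 - 4 > 0$, the desired inequality $c_2 \leq \frac{2c_1^3 - 4c_1^2 + 2}{3c_1 - 4}$ is \emph{exactly} equivalent to $p_a(C') \geq 0$. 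So everything comes down to proving $p_a(C') \geq 0$.

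To establish this I would work on a general surface $S \supset C$ of degree $c_1$. The crucial point is that $S$ is smooth: global generation of $\ic _C(c_1)$ forces the conormal twist $N^{\vee}_C(c_1)$ to be globally generated as well (it is a quotient of $\ic _C(c_1)$ restricted to $C$), so by Bertini the general member of $|\ic _C(c_1)|$ is smooth both away from and along $C$, and irreducible as the system is not composed with a pencil. On such a smooth $S$ one has $\omega _S \simeq \oc _S(c_1-4)$, and comparing the adjunction formula $\omega _C \simeq (\omega _S \otimes \oc _S(C))|_C$ with $\omega _C \simeq \oc _C(c_1-4)$ forces $\oc _S(C)|_C$ to be trivial, i.e. $C^2 = 0$ on $S$. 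Moreover $\oc _S(C') \simeq \ic _{C/S}(c_1)$ is a quotient of the globally generated sheaf $\ic _C(c_1)$, hence is itself globally generated, so $|C'|$ is base-point-free and $(C')^2 \geq 0$. Adjunction for $C'$ then gives $2p_a(C')-2 = (C')^2 + (c_1-4)\deg C' \geq 0$, since $c_1 \geq 4$ and $\deg C' \geq 0$; in particular $p_a(C') \geq 0$, as needed. The main obstacle is precisely the smoothness (and Bertini-irreducibility) of the general surface through $C$, since this is what legitimizes the adjunction and self-intersection computations; once it is in hand, the rest is bookkeeping with the liaison formulas.
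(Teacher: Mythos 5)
Your proof is correct, and it shares the paper's skeleton: link $C$ by the complete intersection $U$ of two general degree-$c_1$ surfaces through $C$, then reduce the stated bound to $p_a(C')\geq 0$ via the liaison genus formula (your algebra here matches the paper's exactly). Where you genuinely diverge is in the key step, the proof that $p_a(C')\geq 0$. The paper never works on a single surface: from the liaison exact sequence $0 \to \ic _U(c_1) \to \ic _C(c_1) \to \omega _{C'}(4-c_1) \to 0$ it reads off that $\omega _{C'}(4-c_1)$ is a quotient of the globally generated sheaf $\ic _C(c_1)$, hence globally generated, hence of non-negative degree; since $c_1\geq 4$ this gives $2p_a(C')-2 \geq \deg (C')(c_1-4)\geq 0$ in one line, with no smoothness or irreducibility statement about the surfaces needed. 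You instead pass through a smooth irreducible $S\in |\ic _C(c_1)|$, get $C^2=0$ from adjunction and $(C')^2\geq 0$ from base-point-freeness of $\oc _S(C')\simeq \ic _{C/S}(c_1)$, and conclude by adjunction for $C'$. This is sound: the Bertini step you rightly flag as the main obstacle is standard (a general section of $\ic _C(c_1)$ induces a section of the globally generated rank-two bundle $N^{\vee}_C(c_1)$ drawn from a generating subspace, hence nowhere vanishing on the curve $C$, so $S$ is smooth along $C$ as well as off it; a smooth hypersurface in $\Pt$ is then connected and irreducible). The trade-off is that your route costs this smoothness discussion plus intersection theory on $S$, while the paper's costs nothing beyond the mapping-cone sequence; in exchange yours makes the geometry explicit ($C^2=0$, $C'$ nef), which is genuinely illuminating even if both arguments ultimately deliver the same inequality $2p_a(C')-2\geq 0$.
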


\begin{proof} By our assumptions a general section of $E$ vanishes along a smooth curve, $C$, such that $\ic _C(c_1)$ is generated by global sections. Let $U$ be the complete intersections of two general surfaces containing $C$. Then $U$ links $C$ to a smooth curve, $Y$. We have $Y \neq \varnothing$ since $E$ doesn't split. The exact sequence of liaison: $0 \to \ic _U(c_1) \to \ic _C(c_1) \to \omega _Y(4-c_1) \to 0$ shows that $\omega _Y(4-c_1)$ is generated by global sections. Hence $\deg (\omega _Y(4-c_1))\geq 0$. We have $\deg (\omega _Y(4-c_1))=2g' -2 +d'(4-c_1)$ ($g' =p_a(Y)$, $d'=\deg (Y)$). So $g' \geq \frac{d'(c_1-4)+2}{2}\geq 0$ (because $c_1 \geq 4$). On the other hand, always by liaison, we have: $g'-g= \frac{1}{2}(d'-d)(2c_1-4)$ ($g=p_a(C)$, $d=\deg (C)$). Since $d'=c_1^2-d$ and $g=\frac{d(c_1-4)}{2}+1$ (because $\omega _C(4-c_1) \simeq \oc _C$), we get: $g' =1+\frac{d(c_1-4)}{2}+\frac{1}{2}(c_1^2-2d)(2c_1-4) \geq 0$ and the result follows.
\end{proof} 

\par

Now we have:

\begin{proposition}
\label{n=3c1=4}
Let $E$ be a rank two globally generated vector bundle on $\Pt$. If $c_1(E)=4$ and if $E$ doesn't split, then $5 \leq c_2 \leq 8$ and there is an exact sequence:
$0 \to \oc \to E \to \ic _C(4) \to 0$, where $C$ is a smooth irreducible elliptic curve of degree $c_2$ or, if $c_2=8$, $C$ is the disjoint union of two smooth elliptic quartic curves.
\end{proposition}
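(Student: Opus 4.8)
The plan is to feed $E$ through the two lemmas already available and then analyse the base locus of $\ic _C(4)$. Since $E$ is globally generated and non-split, the lemma on rank two bundles with a section gives an exact sequence $0 \to \oc \to E \to \ic _C(4) \to 0$ with $C$ smooth of degree $c_2$, with $\ic _C(4)$ globally generated, and with $\omega _C(4-c_1) = \omega _C \simeq \oc _C$. Thus every connected component of $C$ is a smooth curve of genus one and $C$ is a disjoint union of such curves. The upper bound is immediate: putting $c_1 = 4$ in Lemma \ref{boundc2} yields $c_2 \leq (128 - 64 + 2)/(12 - 4) = 66/8$, hence $c_2 \leq 8$.

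For the lower bound I would isolate the two smallest degrees. A smooth genus one curve in $\Pt$ has degree at least three, with equality only for a plane cubic, and a smooth genus one curve of degree four must be a nondegenerate elliptic quartic, i.e. a complete intersection of two quadrics (there is no plane quartic of genus one). So $c_2 = 3$ forces $C = H \cap F$ (a plane and a cubic surface) and $c_2 = 4$ forces $C = Q_1 \cap Q_2$; in both cases $C$ is a complete intersection $F_a \cap F_b$ with $a + b = 4$. The Koszul complex then provides a locally free extension $0 \to \oc \to \oc (a) \oplus \oc (b) \to \ic _C(4) \to 0$, and since $\mathrm{Ext}^1(\ic _C(4), \oc) \simeq H^0(\omega _C) = H^0(\oc _C) = k$ for connected $C$, every locally free extension of $\ic _C(4)$ by $\oc$ is isomorphic to $\oc (a) \oplus \oc (b)$. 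Hence such an $E$ would split, contrary to hypothesis, and therefore $c_2 \geq 5$.

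It remains to show that $C$ is irreducible except when $c_2 = 8$, and the crucial point is to forbid a plane cubic component once $C$ is disconnected. Suppose $C_1 = \{f_1 = 0\} \subset H$ is a plane cubic and $C'$ is a nonempty part of $C$ disjoint from $C_1$. Restricting a quartic $Q$ through $C$ to $H$ gives $Q|_H = f_1 \ell '$ with $\ell '$ linear, and $\ell '$ must vanish at the points of $C' \cap H$, which lie off $C_1$. If $C'$ has a nondegenerate component $C_2$, the points of $C_2 \cap H$ (there are $\deg C_2 \geq 4$ of them) cannot lie on a line, since an irreducible nondegenerate curve of degree $d$ has no $d$-secant line; so $\ell ' = 0$, every section of $\ic _C(4)$ vanishes on $H$, and $H$ lies in the base locus, which is absurd. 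If instead $C' = C_2$ is a second plane cubic spanning a plane $H' \neq H$, then $C_2 \cap H$ lies on the line $L = H \cap H'$, forcing $\ell '$ to cut out $L$; then every section vanishes along $L$, so the points of $L$ off $C$ are base points, again absurd. Consequently every component of a disconnected $C$ is nondegenerate of degree at least four; as $\deg C \leq 8$ this leaves only two disjoint elliptic quartics, with $c_2 = 8$. In all other cases $C$ is irreducible, hence a nondegenerate elliptic curve of degree $c_2 \in \{5,6,7,8\}$.

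The step I expect to be the main obstacle is this last one: the base-point bookkeeping for $\ic _C(4)$ together with the input that an irreducible nondegenerate curve of degree $d$ admits no $d$-secant line, which is precisely what prevents a plane cubic from coexisting with a further component.
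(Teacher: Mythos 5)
Your proof is correct and follows essentially the same route as the paper's: the bound $c_2\le 8$ from Lemma \ref{boundc2}, the exclusion of $c_2\le 4$ via the complete-intersection/splitting argument, and the exclusion of a plane cubic component of a disconnected $C$ by restricting quartics through $C$ to the plane of that cubic (line of intersection with a second plane cubic, otherwise the whole plane). The only differences are expository: you spell out the one-dimensionality of $\mathrm{Ext}^1(\ic_C(4),\oc)$ behind ``complete intersection implies $E$ splits'' and the no-$d$-secant-line fact behind ``the $\deg(X)\ge 4$ points of $X\cap\lag P\rag$ are not collinear'', both of which the paper leaves implicit.
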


\begin{proof}
A general section of $E$ vanishes along $C$ where $C$ is a smooth curve with $\omega _C = \oc _C$ and where $\ic _C(4)$ is generated by global sections. Let $C=C_1\cup ...\cup C_r$ be the decomposition into irreducible components: the union is disjoint, each $C_i$ is a smooth elliptic curve hence has degree at least three.\\
By Lemma \ref{boundc2} $d = \deg (C) \leq 8$. If $d \leq 4$ then $C$ is irreducible and is a complete intersection which is impossible since $E$ doesn't split. If $d=5$, $C$ is smooth irreducible.\\
{\it Claim:} If $8 \geq d \geq 6$, $C$ cannot contain a plane cubic curve.\\
Assume $C=P\cup X$ where $P$ is a plane cubic and where $X$ is a smooth elliptic curve of degree $d-3$. If $d=6$, $X$ is also a plane cubic and every quartic containing $C$ contains the line $\lag P\rag \cap \lag X \rag$. If $deg(X)\geq 4$ then every quartic, $F$, containing $C$ contains the plane $\lag P\rag$. Indeed $F|H$ vanishes on $P$ and on the $deg(X) \geq 4$ points of $X \cap \lag P \rag$, but these points are not on a line so $F|H=0$. In both cases we get a contradiction with the fact that $\ic _C(4)$ is generated by global sections. The claim is proved.\\
It follows that, if $8 \geq d \geq 6$, then $C$ is irreducible except if $C=X \cup Y$ is the disjoint union of two elliptic quartic curves.
\end{proof}

\par
Now let's show that all possibilities of Proposition \ref{n=3c1=4} do actually occur. For this we have to show the existence of a smooth irreducible elliptic curve of degree $d$, $5 \leq d \leq 8$ with $\ic _C(4)$ generated by global sections (and also that the disjoint union of two elliptic quartc curves is cut off by quartics).

\begin{lemma}
\label{c1=4c2=5}
There exist rank two vector bundles with $c_1=4, c_2=5$ which are globally generated. More precisely any such bundle is of the form $\nc (2)$, where $\nc$ is a null-correlation bundle (a stable bundle with $c_1=0, c_2=1$).
\end{lemma}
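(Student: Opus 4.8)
The plan is to prove the two assertions separately, the common thread being the identification of null-correlation bundles with the stable rank two bundles on $\Pt$ having $c_1=0,c_2=1$.

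For the existence statement I would first record the Chern class bookkeeping: if $\nc$ is a null-correlation bundle, then $\nc(2)$ has $c_1=0+2\cdot 2=4$ and $c_2=1+0\cdot 2+2^2=5$, so $\nc(2)$ has the required invariants. To see that $\nc(2)$ is globally generated I would exploit self-duality. Since $\nc$ has rank two and $c_1=0$, one has $\nc^\vee\simeq \nc\otimes(\det \nc)^{-1}\simeq \nc$ automatically; dualizing the defining sequence $0\to \nc\to T(-1)\to \oc(1)\to 0$ therefore gives $0\to \oc(-1)\to \Omega^1(1)\to \nc\to 0$, and twisting by $\oc(1)$ yields $0\to \oc\to \Omega^1(2)\to \nc(1)\to 0$. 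As $\Omega^1(2)$ is globally generated on $\Pn$, its quotient $\nc(1)$ is globally generated, and hence so is $\nc(2)=\nc(1)\otimes\oc(1)$. This settles existence.

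For the converse, let $E$ be globally generated with $c_1=4,c_2=5$. First I would observe that $E$ cannot split: $\oc(a)\oplus\oc(b)$ with $a+b=4$, $ab=5$ would force $a,b$ to be roots of $t^2-4t+5$, which has no real solutions. Hence Proposition \ref{n=3c1=4} applies with $c_2=5$ and produces an exact sequence $0\to \oc\to E\to \ic_C(4)\to 0$ with $C$ a smooth irreducible elliptic curve of degree $5$. Twisting by $\oc(-2)$ gives $0\to \oc(-2)\to E(-2)\to \ic_C(2)\to 0$, so $h^0(E(-2))\leq h^0(\ic_C(2))$. The crux is then to prove $h^0(\ic_C(2))=0$, i.e.\ that an elliptic quintic lies on no quadric. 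I would check this by cases on a hypothetical quadric $Q\supseteq C$: on a smooth quadric a curve of bidegree $(a,b)$ satisfies $a+b=5$ and has arithmetic genus $(a-1)(b-1)$, and $(a-1)(b-1)=1$ forces $a=b=2$ of degree $4$, a contradiction; on a quadric cone an odd-degree curve must pass through the vertex and so cannot be smooth of the required genus. Granting this, $h^0(E(-2))=0$.

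Finally, $F:=E(-2)$ has $c_1=0,c_2=1$, and since $c_1=0$ the inclusion $\oc(-a)\hookrightarrow\oc$ gives $h^0(F(-a))\leq h^0(F)=0$ for all $a\geq 0$; thus $F$ admits no sub-line-bundle of nonnegative degree and is stable. By the standard classification a stable rank two bundle on $\Pt$ with $c_1=0,c_2=1$ is a null-correlation bundle, so $E=F(2)\simeq \nc(2)$. The main obstacle I anticipate is precisely the vanishing $h^0(\ic_C(2))=0$: the whole argument turns on knowing that the quintic furnished by Proposition \ref{n=3c1=4} lies on no quadric, and one must take care to exclude both the smooth quadric and the quadric cone. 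The remaining inputs — self-duality of $\nc$, global generation of $\Omega^1(2)$, and the identification of stable $(0,1)$ bundles with null-correlation bundles — are all standard.
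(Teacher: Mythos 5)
Your proof is correct and follows essentially the same route as the paper's, which simply asserts the two points you elaborate (that $\nc (k)$ is globally generated for $k\geq 1$, and that $h^0(\ic _C(2))=0$ forces stability of the normalized bundle, whence $E \simeq \nc (2)$). The only spot deserving a touch more care is the quadric cone case: a smooth curve of odd degree \emph{can} pass through the vertex and be smooth (e.g.\ a twisted cubic), so to exclude an elliptic quintic you must actually carry out the genus computation on the desingularization $\mathbb{F}_2$ (one finds $g=(a-1)(4-a)\neq 1$ for all integers $a$), or invoke the fact, used elsewhere in the paper, that a smooth irreducible subcanonical curve on a quadric surface is a complete intersection.
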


\begin{proof} The existence is clear (if $\nc$ is a null-correlation bundle then it is well known that $\nc (k)$ is globally generated if $k\geq 1$). Conversely if $E$ has $c_1=4, c_2=5$ and is globally generated, then $E$ has a section vanishing along a smooth, irreducible quintic elliptic curve (cf \ref{n=3c1=4}). Since $h^0(\ic _C(2))=0$, $E$ is stable, hence $E = \nc (2)$.
\end{proof}

\begin{lemma}
\label{c1=4c2=6}
There exist smooth, irreducible elliptic curves, $C$, of degree $6$ with $\ic _C(4)$ generated by global sections.
\end{lemma}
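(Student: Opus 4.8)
The plan is to produce $C$ by liaison, starting from a configuration that is easy to control cohomologically, and then to read off \emph{both} the irreducibility of $C$ \emph{and} the global generation of $\ic _C(4)$ from the Rao module of the linked curve together with Castelnuovo--Mumford regularity. First I would take $\Gamma = L_1\cup L_2\cup L_3 \subset \Pt$ to be three general skew lines. Using $0 \to \ic _\Gamma(n) \to \oc(n) \to \oc _\Gamma(n) \to 0$ together with $\oc _\Gamma(n) = \bigoplus_i \oc _{L_i}(n)$, a direct count gives $h^1(\ic _\Gamma(n)) = 2$ for $n = 0,1$ and $h^1(\ic _\Gamma(n)) = 0$ otherwise. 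In particular $\ic _\Gamma$ is $3$-regular, so $\ic _\Gamma(3)$ is generated by global sections, and the Rao module $\bigoplus_n H^1(\ic _\Gamma(n))$ is concentrated in degrees $0$ and $1$.

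Next I would link $\Gamma$ by a general complete intersection $X$ of two cubics $F,G \in H^0(\ic _\Gamma(3))$. Since $\deg \Gamma = 3 < 9 = \deg X$, the residual $C$ is a genuine curve of degree $6$, and the liaison genus formula gives $p_a(C) = 1$ with $\omega _C \simeq \oc _C$. The liaison correspondence between Rao modules, $H^1(\ic _C(n)) \simeq H^1(\ic _\Gamma(2-n))^{*}$, then shows that $H^1(\ic _C(n))$ is nonzero only for $n = 1,2$. In particular $H^1(\ic _C) = 0$, whence $h^0(\oc _C) = 1$, so $C$ is connected; being smooth (see below) of arithmetic genus $1$, it is a smooth irreducible elliptic curve of degree $6$.

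Finally, for the global generation I would check that $\ic _C$ is $4$-regular. The vanishing $H^1(\ic _C(3)) = 0$ is precisely the degree-$3$ value of the Rao module just computed; the two remaining conditions $H^2(\ic _C(2)) \simeq H^1(\oc _C(2)) = 0$ and $H^3(\ic _C(1)) \simeq H^2(\oc _C(1)) = 0$ hold because $\oc _C(1)$ and $\oc _C(2)$ have positive degree on the elliptic curve $C$. By Castelnuovo--Mumford, $4$-regularity of $\ic _C$ forces $\ic _C(4)$ to be generated by global sections, which is the assertion.

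I expect the smoothness of the linked curve to be the main obstacle: one must make precise that for $F,G$ general in $H^0(\ic _\Gamma(3))$ the residual $C$ is smooth, including along $C \cap \Gamma$. This is exactly where the hypotheses that $\Gamma$ is smooth and $\ic _\Gamma(3)$ is globally generated enter, via the standard liaison smoothness theorem. A further point to verify is that the unique quadric $Q$ necessarily containing three skew lines (forcing the $L_i$ into one ruling of $Q$) does not destroy the genericity of the cubics $F,G$. Once smoothness is secured, irreducibility and the global generation of $\ic _C(4)$ follow formally from the Rao module computation and the regularity estimate above.
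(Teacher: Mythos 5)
Your construction is exactly the one in the paper: link three general skew lines $\Gamma$ (which have $\ic _\Gamma(3)$ globally generated) to $C$ by a general complete intersection of two cubics, and obtain irreducibility from $h^1(\ic _C)=0$ via liaison. The only real difference is in the last step and is cosmetic: the paper reads off global generation of $\ic _C(4)$ directly from the liaison exact sequence $0 \to \ic _U(4) \to \ic _C(4) \to \omega _\Gamma(2) \to 0$, whereas you deduce it from $4$-regularity of $\ic _C$ computed via Rao-module duality; both verifications are correct (and your appeal to the standard Peskine--Szpiro smoothness statement for the general linked curve is the same implicit input the paper uses).
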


\begin{proof} Let $X$ be the union of three skew lines. The curve $X$ lies on a smooth quadric surface, $Q$, and has $\ic _X(3)$ globally generated (indeed the exact sequence $0 \to \ic _Q \to \ic _X \to \ic _{X,Q} \to 0$ twisted by $\oc (3)$ reads like: $0 \to \oc (1) \to \ic _C(3) \to \oc _Q(3,0) \to 0$). The complete intersection, $U$, of two general cubics containing $X$ links $X$ to a smooth curve, $C$, of degree $6$ and arithmetic genus $1$. Since, by liaison, $h^1(\ic _C)=h^1(\ic _X(-2))=0$, $C$ is irreducible. The exact sequence of liaison: $0 \to \ic _U(4) \to \ic _C(4) \to \omega _X(2) \to 0$ shows that $\ic _C(4)$ is globally generated.
\end{proof}

\par
In order to prove the existence of smooth, irreducible elliptic curves, $C$, of degree $d=7,8$, with $\ic _C(4)$ globally generated, we have to recall some results due to Mori (\cite{M}). 
\par
According to \cite{M} Remark 4, Prop. 6, there exists a smooth quartic surface $S \subset \Pt$ such that $Pic(S) = \bZ H \oplus \bZ X$ where $X$ is a smooth elliptic curve of degree $d$ ($7 \leq d \leq 8$). The intersection pairing is given by: $H^2=4$, $X^2=0$, $H.X=d$. Such a surface doesn't contain any smooth rational curve (\cite{M} p.130). In particular: $(*)$ every integral curve, $Z$, on $S$ has degree $\geq 4$ with equality if and only if $Z$ is a planar quartic curve or an elliptic quartic curve.

\begin{lemma}
\label{notOnCubic}
With notations as above, $h^0(\ic _X(3))=0$.
\end{lemma}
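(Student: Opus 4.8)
The plan is to argue by contradiction. Suppose $h^0(\ic_X(3)) \neq 0$ and fix a cubic surface $F$ containing $X$. Since $S$ is a smooth (hence irreducible) quartic, a cubic cannot contain it, so $F$ does not contain $S$ and $F \cap S$ is an effective divisor on $S$ lying in the linear system $|3H|$ (the restriction of $\oc _{\Pt}(3)$ to $S$). As $X$ is a prime divisor contained in the support of $F\cap S$, the residual divisor $D := (F\cap S) - X$ is effective, with class $D \equiv 3H - X$ in $\mathrm{Pic}(S)$.

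First I would read off the two numerical invariants of $D$ from the intersection table $H^2 = 4$, $X^2 = 0$, $H.X = d$: its degree $\deg D = D.H = 12 - d$ and its self-intersection $D^2 = (3H - X)^2 = 9H^2 - 6H.X + X^2 = 36 - 6d$. For $d \in \{7,8\}$ this gives $\deg D \leq 5$ together with $D^2 \leq -6$; the strong negativity of $D^2$ is what will ultimately force the contradiction.

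Next I would pin down the geometry of $D$ using property $(*)$. Writing $D = \sum _i m_i Z_i$ with the $Z_i$ distinct integral curves and $m_i \geq 1$, each $Z_i$ has degree $\geq 4$ by $(*)$, so $\deg D = \sum _i m_i \deg Z_i \geq 4 \sum _i m_i$. Since $\deg D \leq 5 < 8$, we must have $\sum _i m_i = 1$, that is, $D$ is a single reduced irreducible curve. This is the step I expect to be the crux of the argument: everything hinges on the degree bound in $(*)$ forcing $D$ to be integral, so that positivity of the arithmetic genus becomes available.

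Finally, $S$ is a K3 surface ($K_S \simeq \oc _S$, being a quartic in $\Pt$), so adjunction yields $p_a(D) = 1 + \frac{1}{2}D^2$ for the integral curve $D$. The arithmetic genus of an integral projective curve is $h^1(\oc _D) \geq 0$, whence $D^2 \geq -2$; this contradicts $D^2 = 36 - 6d \leq -6$. Therefore no cubic can contain $X$, and $h^0(\ic _X(3)) = 0$.
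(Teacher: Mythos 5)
Your proof is correct and uses exactly the same two ingredients as the paper's own argument (the adjunction computation $p_a(3H-X)=19-3d<0$ on the K3 surface $S$, and property $(*)$ forcing every component of a curve on $S$ to have degree $\geq 4$), merely run in the opposite logical order: the paper first deduces from the negative genus that the residual curve is non-integral and then contradicts $(*)$ via the degree bound, while you first use $(*)$ and the degree bound to force integrality and then contradict the negative genus. This is essentially the same proof.
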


\begin{proof} A curve $Z \in |3H - X|$ has invariants $(d_Z, g_Z)= (5,-2)$ (if $d=7$) or $(4,-5)$ (if $d=8$), so $Z$ is not integral. It follows that $Z$ must contain an integral curve of degree $<4$, but this is impossible.
\end{proof}

\begin{lemma}
\label{d=7,8-g=1}
With notations as above $|4H -X|$ is base point free, hence there exist smooth, irreducible elliptic curves, $X$, of degree $d$, $7 \leq d \leq 8$, such that $\ic _X(4)$ is globally generated.
\end{lemma}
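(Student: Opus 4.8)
The plan is to reduce the global generation of $\ic _X(4)$ on $\Pt$ to the base point freeness of the linear system $|4H-X|$ on the K3 surface $S$, and to establish the latter by showing that $D:=4H-X$ is nef and then invoking the structure theory of linear systems on K3 surfaces. First I would record the exact sequence on $\Pt$
$$0 \to \oc _{\Pt} \to \ic _X(4) \to \oc _S(4H-X) \to 0,$$
obtained from $0 \to \ic _S \to \ic _X \to \ic _{X,S} \to 0$ twisted by $\oc (4)$, using $\ic _S(4)\simeq \oc _{\Pt}$ (as $S$ is a quartic) and $\ic _{X,S}(4)\simeq \oc _S(4H-X)$. Since $H^1(\oc _{\Pt})=0$, the map $H^0(\ic _X(4)) \to H^0(\oc _S(4H-X))$ is surjective; hence, by the standard fact that in a short exact sequence with globally generated sub and quotient and surjective map on global sections the middle term is itself globally generated, it suffices to prove that $\oc _S(4H-X)$ is globally generated, i.e. that $|4H-X|$ is base point free.

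The first step toward this is to check that $D=4H-X$ is nef. Using $H^2=4$, $X^2=0$, $H\cdot X=d$ I would compute, for an integral curve $C=aH+bX$ on $S$, the numbers $C\cdot H = 4a+bd>0$, $D\cdot C = (16-d)a+4bd$ (so $D\cdot C = 9a+28b$ if $d=7$ and $D\cdot C = 8(a+4b)$ if $d=8$), together with $C^2 = 2a(2a+bd)$. Since $S$ contains no smooth rational curve, every integral curve satisfies $C^2\geq 0$. A short case analysis then shows that the inequalities $D\cdot C<0$ and $C\cdot H>0$ together force $C^2<0$, a contradiction; hence $D\cdot C\geq 0$ for every integral curve and $D$ is nef.

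Next I would compute $D^2 = 64-8d$, so $D^2=8$ when $d=7$ and $D^2=0$ when $d=8$, and treat the two cases by the theory of complete linear systems on K3 surfaces (Saint-Donat). When $d=8$, $D$ is a primitive nef class with $D^2=0$, hence $|D|$ is a free elliptic pencil, in particular base point free. When $d=7$, $D$ is nef with $D^2=8>0$; for such a class a base point can occur only in the presence of a class $E$ with $E^2=0$ and $D\cdot E=1$, and I would rule this out directly: $E^2 = 2a(2a+7b)=0$ forces $a=0$ or $2a+7b=0$, and in either case one computes that $D\cdot E$ is a multiple of $7$, so $D\cdot E=1$ is impossible. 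Therefore $|4H-X|$ is base point free in both cases, and the reduction of the first paragraph gives that $\ic _X(4)$ is globally generated; as $X$ is by construction a smooth irreducible elliptic curve of degree $d$, this is exactly the assertion of the lemma.

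The main obstacle is precisely the passage to base point freeness. Nefness is elementary once one uses that $S$ has no smooth rational curves, but deducing base point freeness from nefness requires the K3-specific input that no obstructing curve exists — no $(-2)$-curve, respectively no genus-one class meeting $D$ in a single point — and the verification that no such class lives in $\bZ H\oplus \bZ X$ is where the numerical data $H^2=4$, $X^2=0$, $H\cdot X=d$ and property $(*)$ are genuinely used.
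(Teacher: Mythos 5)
Your proposal is correct and follows the same overall reduction as the paper --- pass to the quartic surface $S$ via the sequence $0 \to \oc \to \ic _X(4) \to \ic _{X,S}(4) \to 0$ and show that $|4H-X|$ is base point free using Saint-Donat --- but the key verification is carried out by a genuinely different argument. The paper proves that \emph{every member} of $|4H-X|$ is integral: a non-integral member must contain an integral component of degree exactly $4$, which is either a planar quartic (so linearly equivalent to $H$, excluded because $3H \sim X+Y_2$ would contradict $h^0(\ic _X(3))=0$, i.e.\ Lemma \ref{notOnCubic}) or an elliptic quartic (excluded by the intersection form); base point freeness then follows from Saint-Donat's statement for an integral curve with nonnegative self-intersection. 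You instead establish nefness of $D=4H-X$ by a purely numerical case analysis (which I have checked for both $d=7$ and $d=8$; it uses only that the absence of smooth rational curves forces $C^2\geq 0$ for every integral curve) and then invoke the finer form of Saint-Donat's theorem describing the base locus of a nef divisor on a K3: for $D^2=0$ a primitive nef class gives a free elliptic pencil, and for $D^2>0$ the only obstruction is $D\sim kE+\Gamma$ with $E^2=0$, $D\cdot E=1$, which you exclude modulo $7$. Your route bypasses Lemma \ref{notOnCubic} entirely, at the price of quoting a stronger version of Saint-Donat's theorem than the one recorded as Theorem 5 in Mori's paper (you should supply a reference for it); note also that for $d=7$ you could conclude more quickly by observing that the obstructing $\Gamma$ would be a $(-2)$-curve, hence a smooth rational curve, and $S$ contains none. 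One small point to make explicit in either approach: $|4H-X|\neq \varnothing$, which follows from Riemann--Roch since $h^0(D)\geq 2+D^2/2\geq 2$ once one notes $h^0(-D)=0$ from $D\cdot H>0$.
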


\begin{proof} Let's first prove the following:
{\it Claim:} Every curve in $|4H - X|$ is integral.
\par
If $Y \in |4H -X|$ is not integral then $Y = Y_1+Y_2$ where $Y_1$ is integral with $\deg (Y_1)=4$ (observe that $\deg (Y) = 9$ or $8$). 
\par
If $Y_1$ is planar then $Y_1 \sim H$, so $4H - X \sim H+Y_2$ and it follows that $3H \sim X+Y_2$, in contradiction with $h^0(\ic _X(3))=0$ (cf \ref{notOnCubic}).
\par
So we may assume that $Y_1$ is a quartic elliptic curve, i.e. (i) $Y_1^2=0$ and (ii) $Y_1.H=4$. Setting $Y_1 = aH+bX$, we get from (i): $2a(2a+bd)=0$. Hence $(\alpha )\,\, a =0$, or $(\beta )\,\, 2a+bd=0$. 
\par
$(\alpha )$ In this case $Y_1=bX$, hence (for degree reasons and since $S$ doesn't contain curves of degree $<4$), $Y_2 = \varnothing$ and $Y =X$, which is integral.
\par
$(\beta )$ Since $Y_1.H=4$, we get $2a+(2a+bd)=2a=4$, hence $a=2$ and $bd =-4$ which is impossible ($d=7$ or $8$ and $b \in \bZ$).
\par
This concludes the proof of the claim.
\par
Since $(4H -X)^2 \geq 0$, the claim implies that $4H - X$ is numerically effective. Now we conclude by a result of Saint-Donat (cf \cite{M}, Theorem 5) that $|4H - X|$ is base point free, i.e. $\ic _{X,S}(4)$ is globally generated. By the exact sequence: $0 \to \oc \to \ic _X(4) \to \ic _{X,S}(4) \to 0$ we get that $\ic _X(4)$ is globally generated.
\end{proof}

\begin{remark}
If $d=8$, a general element $Y \in |4H - X|$ is a smooth elliptic curve of degree $8$. By the way $Y \neq X$ (see \cite{BE}). The exact sequence of liaison: $0 \to \ic _U(4) \to \ic _X(4) \to \omega _Y \to 0$ shows that $h^0(\ic _X(4))=3$ (i.e. $X$ is of maximal rank).
In case $d=8$ Lemma \ref{d=7,8-g=1} is stated in \cite{D}, however the proof there is incomplete, indeed in order to apply the enumerative formula of \cite{F} one has to know that $X$ is a \emph{connected} component of $\displaystyle \bigcap _{i=1}^3 F_i$; this amounts to say that the base locus of $|4H-X|$ on $F_1$ has dimension $\leq 0$. 
\end{remark}

\par
To conclude we have:

\begin{lemma}
Let $X$ be the disjoint union of two smooth, irreducible quartic elliptic cuvres, then $\ic _X(4)$ is generated by global sections.
\end{lemma}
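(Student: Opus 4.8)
The plan is to generate $\ic_X(4)$ entirely by \emph{products of quadrics}. Write $X = X_1 \sqcup X_2$ with the $X_i$ smooth irreducible elliptic quartic curves. The first step is to recall that each $X_i$ is a complete intersection of two quadrics: since $\deg \oc_{X_i}(2) = 8 > 2g-2 = 0$, Riemann--Roch gives $h^0(\oc_{X_i}(2)) = 8$, whence $h^0(\ic_{X_i}(2)) = 10 - 8 = 2$. So $X_i$ is cut out by a pencil of quadrics $\lag Q_1^{(i)},Q_2^{(i)}\rag$, and because two quadrics already meet in degree $4$, the base locus of this pencil is exactly $X_i$. The second step is the elementary observation that if $Q$ is any quadric through $X_1$ and $Q'$ any quadric through $X_2$, then $QQ'$ is a quartic vanishing on $X_1 \cup X_2 = X$; all such products furnish sections of $\ic_X(4)$.

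The bulk of the argument is then to verify that these products already make the evaluation map $H^0(\ic_X(4))\otimes \oc \to \ic_X(4)$ surjective, which I would check stalk by stalk. At a point $p \notin X$ one has $p \notin X_1$ and $p \notin X_2$; since the base locus of each pencil is the corresponding $X_i$, I can choose $Q \in \lag Q_1^{(1)},Q_2^{(1)}\rag$ and $Q' \in \lag Q_1^{(2)},Q_2^{(2)}\rag$ with $Q(p)\neq 0$, $Q'(p)\neq 0$, so $QQ'$ generates the stalk $(\ic_X(4))_p = \oc_p$ there.

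The delicate step, and the one I expect to be the main obstacle, is local generation at a point $p \in X$ (say $p \in X_1$), where $\ic_X$ fails to be locally free. Here I would use the disjointness of the two components: near $p$ the curve $X_2$ is absent, so $(\ic_X)_p = (\ic_{X_1})_p$, and the latter is generated as an $\oc_p$-module by the two defining quadrics, $(\ic_{X_1})_p = (Q_1^{(1)},Q_2^{(1)})\oc_p$. Since $p \notin X_2$, I can pick a quadric $Q'$ in the pencil of $X_2$ with $Q'(p)\neq 0$, i.e.\ a unit in $\oc_p$. Then the two sections $Q_1^{(1)}Q'$ and $Q_2^{(1)}Q'$ of $\ic_X(4)$ generate $(Q_1^{(1)},Q_2^{(1)})\oc_p = (\ic_X)_p$, as required. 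By symmetry the same works at points of $X_2$, and combining the three cases gives the global generation of $\ic_X(4)$.
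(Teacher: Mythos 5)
Your proof is correct and is essentially the paper's argument: both rest on the fact that each quartic elliptic curve is a complete intersection of two quadrics, and both generate $\ic_X(4)$ by products of a quadric through $X_1$ with a quadric through $X_2$, using disjointness at points of $X$. The paper merely packages your stalk-by-stalk verification into the exact sequence obtained by tensoring the Koszul resolution $0 \to \oc(-4) \to 2.\oc(-2) \to \ic_{X_1} \to 0$ with $\ic_{X_2}$ (exact because the curves are disjoint), but the content is the same.
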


\begin{proof} Let $X = C_1 \sqcup C_2$. We have: $0 \to \oc (-4) \to 2.\oc (-2) \to \ic _{C_1} \to 0$, twisting by $\ic _{C_2}$, since $C_1 \cap C_2 = \varnothing$, we get:\\ $0 \to \ic _{C_2}(-4) \to 2.\ic _{C_2}(-2) \to \ic _X \to 0$ and the result follows.
\end{proof}

Summarizing:

\begin{proposition}
\label{c1=4Fin}
There exists an indecomposable rank two vector bundle, $E$, on $\Pt$, generated by global sections and with $c_1(E)=4$ if and only if $5 \leq c_2(E) \leq 8$ and in these cases there is an exact sequence:
$$0 \to \oc \to E \to \ic _C(4) \to 0$$
where $C$ is a smooth irreducible elliptic curve of degree $c_2(E)$ or, if $c_2(E) =8$, the disjoint union of two smooth elliptic quartic curves.
\end{proposition}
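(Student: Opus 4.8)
The plan is to read the statement as two implications and to notice that one of them is already available. The necessity part --- that an indecomposable globally generated $E$ with $c_1=4$ forces $5\le c_2\le 8$, admits the extension $0\to\oc\to E\to\ic_C(4)\to 0$, and has $C$ of the stated shape --- is exactly the content of Proposition \ref{n=3c1=4} (recall that for a rank two bundle ``does not split into line bundles'' and ``indecomposable'' are the same condition). So the only thing left to prove is the converse: for each admissible value $c_2\in\{5,6,7,8\}$ an indecomposable globally generated bundle with Chern classes $(4,c_2)$ genuinely exists. Here the preceding lemmas carry the geometric weight, and the remaining task is to assemble them.

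For the existence I would argue by Serre's construction (the Hartshorne--Serre correspondence). For each $c_2$, Lemmas \ref{c1=4c2=5}, \ref{c1=4c2=6} and \ref{d=7,8-g=1}, together with the final lemma for the reducible degree $8$ case, supply a smooth curve $C\subset\Pt$ of degree $c_2$ with $\omega_C\simeq\oc_C$ (so $C$ is an elliptic curve, respectively a disjoint union of two elliptic quartics, each component having $\omega_{C_i}\simeq\oc_{C_i}$) and with $\ic_C(4)$ generated by global sections. Since $C$ is smooth, hence a local complete intersection, and subcanonical with $\omega_C\simeq\oc_C=\oc_C(4-4)$, Serre's construction applied to the nowhere vanishing section $1\in H^0(\omega_C)=H^0(\oc_C)$ produces a rank two vector bundle $E$ fitting in $0\to\oc\to E\to\ic_C(4)\to 0$ with $c_1(E)=4$ and $c_2(E)=\deg(C)=c_2$; local freeness of $E$ is precisely the nowhere vanishing of the chosen section, which holds because that section trivializes $\oc_C$.

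It then remains to check that $E$ is globally generated and indecomposable. For global generation I would run a diagram chase on evaluation maps. Because $H^1(\oc)=0$, applying $H^0$ to the defining sequence and tensoring over $k$ by $\oc$ gives a short exact sequence $0\to H^0(\oc)\otimes\oc\to H^0(E)\otimes\oc\to H^0(\ic_C(4))\otimes\oc\to 0$ mapping to $0\to\oc\to E\to\ic_C(4)\to 0$. The left evaluation map is an isomorphism and the right one is surjective since $\ic_C(4)$ is globally generated, so an application of the snake lemma forces the middle evaluation map $H^0(E)\otimes\oc\to E$ to be surjective; that is, $E$ is globally generated. Indecomposability is then automatic: a decomposable globally generated rank two bundle with $c_1=4$ must be one of $\oc\oplus\oc(4)$, $\oc(1)\oplus\oc(3)$, $\oc(2)\oplus\oc(2)$, whose second Chern classes are $0,3,4$, whereas here $c_2\ge 5$.

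I expect the only real subtlety internal to this proposition to be the transfer of global generation from $\ic_C(4)$ to $E$ --- the diagram chase above --- together with invoking Serre's construction in its subcanonical form so that the output sheaf is actually locally free. All of the hard geometry (the existence of the curves $C$, and in particular the base point freeness of $|4H-X|$ on Mori's quartic surface for $d=7,8$) has been isolated in the earlier lemmas, so the principal remaining risk is one of bookkeeping: verifying that each of the four values of $c_2$ is covered by exactly one of the cited constructions and that the reducible degree $8$ curve indeed satisfies $\omega_C\simeq\oc_C$ on each component.
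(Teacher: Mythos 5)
Your proposal is correct and follows the same route as the paper, which offers no separate argument for this proposition but presents it as a summary of Proposition \ref{n=3c1=4} (necessity) and Lemmas \ref{c1=4c2=5}, \ref{c1=4c2=6}, \ref{d=7,8-g=1} and the lemma on two disjoint elliptic quartics (existence), with the Serre correspondence and the evaluation-map diagram chase left implicit exactly as you make them explicit. The bookkeeping you flag checks out: each $c_2\in\{5,6,7,8\}$ is covered, and the split bundles are excluded since they would have $c_2=ab\le 4$ for $a+b=4$, $a,b\ge 0$.
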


\subsection{Globally generated rank two vector bundles with $c_1 = 5$.}\quad

We start by listing the possible cases:

\begin{proposition}
\label{possiblec1=5}
If $E$ is an indecomposable, globally generated, rank two vector bundle on $\Pt$ with $c_1(E)=5$, then $c_2(E) \in \{8,10, 12 \}$ and there is an exact sequence:
$$0 \to \oc \to E \to \ic _C(5) \to 0$$
where $C$ is a smooth, irreducible curve of degree $d=c_2(E)$, with $\omega _C \simeq \oc _C(1)$.
\par
In any case $E$ is stable.
\end{proposition}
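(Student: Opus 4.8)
The plan is to feed $c_1=5$ into the section lemma of \S1.1, convert the condition $\omega_C\simeq\oc_C(1)$ into a parity statement about the components of $C$, bound $c_2$, and then remove the extreme values; I would invoke that lemma first. Since $E$ is indecomposable it is non-split, so a general section vanishes along a smooth curve $C$ of degree $c_2$ sitting in $0\to\oc\to E\to\ic_C(5)\to 0$ with $\ic_C(5)$ globally generated and $\omega_C\simeq\oc_C(1)$. Decomposing $C=\bigsqcup_i C_i$ into its (disjoint, since $C$ is smooth) irreducible components, the restricted isomorphism $\omega_{C_i}\simeq\oc_{C_i}(1)$ gives $2g_i-2=d_i$; hence every component has \emph{even} degree $d_i\geq 4$ (a degree-$2$ component would be a conic, forcing the absurd $g_i=2$). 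Lemma \ref{boundc2} with $c_1=5$ yields $c_2\leq\frac{2\cdot 5^3-4\cdot 5^2+2}{3\cdot 5-4}=\frac{152}{11}<14$, and since $c_2=\sum_i d_i$ is even this leaves $c_2\in\{4,6,8,10,12\}$. The values $4$ and $6$ are discarded at once: there $C$ is irreducible (a second component would force degree $\geq 8$), of degree $4$ genus $3$ or degree $6$ genus $4$, i.e.\ a plane quartic (complete intersection of a plane and a quartic) or the canonical genus-$4$ curve (complete intersection of a quadric and a cubic); both are complete intersections, so the Serre construction gives $E\simeq\oc(1)\oplus\oc(4)$ or $\oc(2)\oplus\oc(3)$, contradicting indecomposability. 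This leaves $c_2\in\{8,10,12\}$.

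The core is irreducibility, which I would get by showing that \emph{every} admissible disjoint decomposition (even parts $\geq 4$ summing to $8,10,12$) contradicts global generation of $\ic_C(5)$, always via the principle that a residual linear, planar or quadric-cut locus is forced into the base scheme. These split into those with a plane-quartic component ($4+4$, $4+6$, $4+4+4$, $4+8$) and the single exception $6+6$. For a plane-quartic component $Q_4\subset H$ with non-empty residual $R$ (here $\deg R\geq 4$ and $R\cap H\neq\varnothing$), every quintic $F\supset C$ restricts on $H$ to $F|_H=Q_4\cdot\ell_F$ or $F|_H\equiv 0$, and the points $R\cap H$, being disjoint from $Q_4$, must lie on $\ell_F$; if they are not collinear then $F|_H\equiv 0$ for all $F$ and every quintic contains $H$, whereas if they lie on a line $\ell$ then $\ell_F=\ell$ for all $F$ and every quintic contains $\ell$. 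Either way $\ic_C(5)$ fails to be globally generated at a general point of $H$ (resp.\ of $\ell$), a contradiction; so no plane-quartic component occurs.

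For $6+6$ I would use the quadrics. Writing $C=R\sqcup R'$ with $R,R'$ canonical genus-$4$ sextics on unique quadrics $Q,Q'$, restriction of $F\supset C$ to $Q$ gives $F|_Q=R+D_F$ with $D_F\in|\oc_Q(2,2)|$ containing the $12$ points $R'\cap Q=R'\cap(Q\cap Q')$; as these already lie on the $(2,2)$-curve $\Gamma:=Q\cap Q'$ and no second $(2,2)$-curve can meet $\Gamma$ in $12>8$ points, one gets $D_F=\Gamma$ for every $F$, so every quintic contains $\Gamma\not\subset C$, again impossible (and the sub-case $Q=Q'$ is easier: then $C$ has type $(6,6)$ on $Q$ and $\ic_{C,Q}(5)=\oc_Q(-1,-1)$ has no sections). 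Hence $C$ is irreducible. I expect $6+6$ to be the \textbf{main obstacle}: it is the only decomposition with no planar component, so the clean plane-restriction trick does not apply and one is pushed into the more delicate residual-curve and intersection-number bookkeeping on the quadrics.

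Finally, for stability: since $c_1=5$ is odd, $E$ is stable iff no $\oc(a)$ with $a\geq 3$ injects into it, i.e.\ iff $h^0(E(-3))=0$; twisting $0\to\oc\to E\to\ic_C(5)\to 0$ by $\oc(-3)$ and using $h^0(\oc(-3))=h^1(\oc(-3))=0$ identifies $h^0(E(-3))$ with $h^0(\ic_C(2))$, so stability is equivalent to $C$ lying on no quadric. Suppose $C\subset Q$. On a smooth quadric $\deg C=2g(C)-2$ forces $(2a-3)(2b-3)=9$, whose only solution with $\deg\in\{8,10,12\}$ is type $(2,6)$ in degree $8$; there $\ic_{C,Q}(5)=\oc_Q(3,-1)$ has no sections, so every quintic through $C$ contains $Q$ and $\ic_C(5)$ is not globally generated, a contradiction. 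Degrees $10$ and $12$ admit no such bidegree at all, and a short genus computation together with the same ``quintics must contain the quadric'' mechanism rules out the singular and reducible quadrics, so $h^0(\ic_C(2))=0$ and $E$ is stable.
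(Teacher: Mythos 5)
Your proposal is correct and follows essentially the same route as the paper's proof: the Serre construction, the observation that every component has even degree $\geq 4$ because $\omega_{C_i}\simeq\oc_{C_i}(1)$, the bound of Lemma \ref{boundc2}, a case-by-case elimination of the reducible configurations by exhibiting a forced base locus (a line, a plane, or the curve $Q\cap Q'$), and stability via $h^0(\ic_C(2))=0$. The only real divergence is in the $6+6$ case, where you run B\'ezout for the residual $(2,2)$-divisor on the quadric $Q$ itself (tacitly assuming $Q$ smooth, and concluding $D_F=\Gamma$ where one strictly only gets a common component --- which still suffices), whereas the paper counts the $24>20$ intersection points of a quintic with the degree-four space curve $Q_1\cap Q_2$, an argument that also covers the cone case; conversely you treat the sub-case $Q=Q'$, which the paper passes over.
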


\begin{proof} A general section of $E$ vanishes along a smooth curve, $C$, of degree $d = c_2(E)$ with $\omega _C \simeq \oc _C(1)$. Hence every irreducible component, $Y$, of $C$ is a smooth, irreducible curve with $\omega _Y \simeq \oc _Y(1)$. In particular $\deg (Y)=2g(Y)-2$ is even and $\deg (Y) \geq 4$.
\begin{enumerate}
\item If $d=4$, then $C$ is a planar curve and $E$ splits.
\item If $d=6$, $C$ is necessarily irreducible (of genus $4$). It is well known that any such curve is a complete intersection $(2,3)$, hence $E$ splits.
\item If $d=8$ and $C$ is not irreducible, then $C = P_1 \sqcup P_2$, the disjoint union of two planar quartic curves. If $L = \lag P_1 \rag \cap \lag P_2 \rag$, then every quintic containing $C$ contains $L$ in contradiction with the fact that $\ic _C(5)$ is generated by global sections. Hence $C$ is irreducible.
\item If $d=10$ and $C$ is not irreducible, then $C = P \sqcup X$, where $P$ is a planar curve of degree $4$ and where $X$ is a degree $6$ curve ($X$ is a complete intersection $(2,3)$). Every quintic containing $C$ vanishes on $P$ and on the $8$ points of $X \cap \lag P \rag$, since these $8$ points are not on a line, the quintic vanishes on the plane $\lag P \rag$. This contradicts the fact that $\ic _C(5)$ is globally generated.
\item If $d=12$ and $C$ is not irreducible we have three possibilities:
\begin{enumerate}
\item $C = P_1 \sqcup P_2 \sqcup P_3$, $P_i$ planar quartic curves
\item $C = X_1 \sqcup X_2$, $X_i$ complete intersection curves of types $(2,3)$
\item $C = Y \sqcup P$, $Y$ a canonical curve of degree $8$, $P$ a planar curve of degree $4$.
\end{enumerate}
(a) This case is impossible (consider the line $\lag P_1 \rag \cap \lag P_2 \rag$).\\
(b) We have $X_i = Q_i \cap F_i$. Let $Z$ be the quartic curve $Q_1 \cap Q_2$. Then $X_i \cap Z = F_i \cap Z$, i.e. $X_i$ meets $Z$ in $12$ points. It follows that every quintic containing $C$ meets $Z$ in $24$ points, hence such a quintic contains $Z$. Again this contradicts the fact that $\ic _C(5)$ is globally generated.\\
(c) This case too is impossible: every quintic containing $C$ vanishes on $P$ and on the points $\lag P \rag \cap Y$, hence on $\lag P \rag$.
\par
We conclude that if $d=12$, $C$ is irreducible.
\end{enumerate}
\par
The normalized bundle is $E(-3)$, since in any case $h^0(\ic _C(2))=0$ (every smooth irreducible subcanonical curve on a quadric surface is a complete intersection), $E$ is stable.
\end{proof}

\par
Now we turn to the existence part.

\begin{lemma}
\label{Ex-c1=5}
There exist indecomposable rank two vector bundles on $\Pt$ with Chern classes $c_1=5$ and $c_2 \in \{8,10 \}$ which are globally generated.
\end{lemma}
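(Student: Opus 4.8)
The plan is to construct, for each of the two values $c_2 \in \{8,10\}$, a smooth irreducible curve $C \subset \Pt$ of degree $c_2$ with $\omega_C \simeq \oc_C(1)$ (i.e. a subcanonical curve of the required kind) such that $\ic_C(5)$ is globally generated, and then to build the vector bundle $E$ via the Serre correspondence. Once such a $C$ is in hand, the exact sequence $0 \to \oc \to E \to \ic_C(5) \to 0$ produces a rank two bundle with $c_1=5$, $c_2 = \deg(C)$; the condition $\omega_C \simeq \oc_C(1)$ is exactly the subcanonical hypothesis $\omega_C(4-c_1)\simeq \oc_C$ needed for the Serre construction to yield a locally free $E$, and global generation of $\ic_C(5)$ will translate into global generation of $E$ away from the zero locus, with the trivial sub-bundle $\oc$ covering the rest. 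Indecomposability will follow from stability (Proposition \ref{possiblec1=5}) or can be checked directly since $h^0(\ic_C(2))=0$.

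The curves themselves I would produce by liaison, mimicking the successful strategy of Lemma \ref{c1=4c2=6}. For $c_2=8$ the target $C$ is a canonical curve of genus $5$ and degree $8$; for $c_2=10$ it is a subcanonical curve with $\omega_C\simeq \oc_C(1)$ of genus $6$. In each case I would start from a simpler curve $X$ whose ideal sheaf is already well understood and globally generated in low degree (for instance a suitable union of conics or lines, or a curve lying on a smooth quadric), take the complete intersection $U$ of two surfaces through $X$, and read off the linked curve $C$ together with its invariants from the standard liaison formulas $d' = \deg(U)-d$ and $2g'-2 = (2c_1-4)(d'-d) + (2g-2)$ used in Lemma \ref{boundc2}. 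The liaison sequence $0 \to \ic_U(5) \to \ic_C(5) \to \omega_X(4-5) \to 0$ then reduces global generation of $\ic_C(5)$ to the global generation of $\omega_X(-1)$ (or the analogous twist), which is controlled by the starting curve $X$; irreducibility of $C$ would follow from a vanishing such as $h^1(\ic_C(-k))=h^1(\ic_X(k-2))=0$, exactly as in the $c_1=4$ case.

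The main obstacle I expect is \emph{global generation} rather than mere existence or smoothness of $C$: it is easy to find subcanonical curves of the right degree and genus, but arranging that $\ic_C(5)$ is generated by global sections (equivalently that the residual term in the liaison sequence is globally generated and that the complete-intersection part contributes enough sections) requires choosing the auxiliary curve $X$ and the linking surfaces with care. In particular the degree-$10$ case is tighter, since a naive choice of $X$ may force the quintics through $C$ to acquire unwanted base loci (a plane or a low-degree curve), precisely the phenomenon that ruled out the reducible configurations in Proposition \ref{possiblec1=5}. A secondary technical point is ensuring the linked curve is genuinely smooth and irreducible and not merely of the correct numerical type; as in the $d=7,8$ analysis of the $c_1=4$ section, this may require invoking a Saint-Donat–type base-point-freeness result on an auxiliary surface, or a Bertini argument on the linear system cutting out $C$, to exclude degenerate members.
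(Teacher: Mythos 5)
Your framework (produce a smooth subcanonical curve $C$ of degree $c_2$ with $\ic _C(5)$ globally generated, then apply the Serre correspondence) is sound in principle, but the proposal has a genuine gap: the construction itself is never carried out. You do not specify the auxiliary curve $X$, the type $(a,b)$ of the linking complete intersection, the verification that the residual term of the liaison sequence is globally generated, or the argument that the linked curve is smooth and irreducible --- and you explicitly flag precisely this step as the ``main obstacle''. Since that step is the entire content of the lemma, what you have is a plan rather than a proof. There is also a concrete slip in the one formula you do write: for a linkage by a complete intersection $U$ of type $(a,b)$, the residual term of $0 \to \ic _U(5) \to \ic _C(5) \to \omega _X(9-a-b) \to 0$ equals $\omega _X(-1)$ only when $a+b=10$, i.e.\ only for a $(5,5)$ link --- in which case $X$ has degree $17$ (resp.\ $15$) and is anything but ``simpler''. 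Linking from a genuinely simple $X$ forces a different twist (e.g.\ $\omega _X(2)$ for a $(3,4)$ link from a degree-$4$ curve), so the numerology must be redone.

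It is worth noting that the paper's actual proof sidesteps the difficulty you anticipate by never constructing the degree-$c_2$ curve directly. It takes $K$ to be a disjoint union of $s$ conics ($s=2,3$), obtained by linking $s$ disjoint lines via a complete intersection of type $(s,3)$; the liaison sequence $0 \to \ic _U(4) \to \ic _K(4) \to \omega _R(5-s) \to 0$ immediately gives that $\ic _K(4)$ is globally generated. Since $\omega _K(1) \simeq \oc _K$, the Serre construction applied to $K$ yields $0 \to \oc \to \ec (2) \to \ic _K(3) \to 0$, and the twist $\ec (3)$ has $(c_1,c_2)=(5,2s+4)\in \{(5,8),(5,10)\}$; its global generation follows at once from $0 \to \oc (1) \to \ec (3) \to \ic _K(4) \to 0$. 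In other words, Serre is applied to a low-degree, highly reducible curve where global generation is easy, and the desired bundle is a twist of the result; the smooth irreducible degree-$c_2$ curve of your plan appears only a posteriori as the zero locus of a general section (Remark \ref{rmk-Exc1=5}). That Remark also shows these curves fail to be of maximal rank, which is a warning that the direct construction you outline is delicate and cannot be dismissed as routine.
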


\begin{proof} Let $R = \sqcup _{i=1}^s L_i$ be the union of $s$ disjoint lines, $2 \leq s \leq 3$. We may perform a liaison $(s,3)$ and link $R$ to $K = \sqcup _{i=1}^s K_i$, the union of $s$ disjoint conics. The exact sequence of liaison: $0 \to \ic _U(4) \to \ic _K(4) \to \omega _R(5-s) \to 0$ shows that $\ic _K(4)$ is globally generated (n.b. $5-s \geq 2$).
\par \noindent
Since $\omega _K(1) \simeq \oc _K$ we have an exact sequence: $0 \to \oc \to \ec (2) \to \ic _K(3) \to 0$, where $\ec$ is a rank two vector bundle with Chern classes $c_1 = -1, c_2 = 2s-2$. Twisting by $\oc (1)$ we get: $0 \to \oc (1) \to \ec (3) \to \ic _K(4) \to 0\,\,\,(*)$. The Chern classes of $\ec (3)$ are $c_1=5$, $c_2=2s+4$ (i.e. $c_2=8, 10$). Since $\ic _K(4)$ is globally generated, it follows from $(*)$ that $\ec (3)$ too, is generated by global sections.
\end{proof}

\begin{remark}\quad
\label{rmk-Exc1=5}
\begin{enumerate}
\item If $\ec$ is as in the proof of Lemma \ref{Ex-c1=5} a general section of $\ec (3)$ vanishes along a smooth, irreducible (because $h^1(\ec (-2))=0$) canonical curve, $C$, of genus $1+c_2/2$ ($g=5,6$) such that $\ic _C(5)$ is globally generated. By construction these curves are not of maximal rank ($h^0(\ic _C(3))=1$ if $g=5$, $h^0(\ic _C(4))=2$ if $g=6$). As explained in \cite{GP} §4 this is a general fact: no canonical curve of genus $g, 5\leq g \leq 6$ in $\Pt$ is of maximal rank. We don't know if this is still true for $g=7$.
\item According to \cite{GP} the general canonical curve of genus $6$ lies on a unique quartic surface.
\item The proof of \ref{Ex-c1=5} breaks down with four conics: $\ic _K(4)$ is no longer globally generated, every quartic containing $K$ vanishes along the lines $L_i$ ($5-s=1$). Observe also that four disjoint lines always have a quadrisecant and hence are an exception to the \emph{normal generation conjecture}(the omogeneous ideal is not generated in degree three as it should be).
\end{enumerate}
\end{remark}

\begin{remark}\quad
\label{c15c212}
The case $(c_1,c_2)=(5,12)$ remains open. It can be shown that if $E$ exists, a general section of $E$ is linked, by a complete intersections of two quintics, to a smooth, irreducible curve, $X$, of degree $13$, genus $10$  having $\omega _X(-1)$ as a base point free $g^1_5$. One can prove the existence of curves $X \subset \Pt$, smooth, irreducible, of degree $13$, genus $10$, with $\omega _X(-1)$ a base point free pencil and lying on \emph{one} quintic surface. But we are unable to show the existence of such a curve with $h^0(\ic _X(5))\geq 3$ (or even with $h^0(\ic _X(5))\geq 2$). We believe that such bundles do not exist.
\end{remark}

\section{Globally generated rank two vector bundles on $\Pn$, $n \geq 4$.}

For $n \geq 4$ and $c_1 \leq 5$ there is no surprise:

\begin{proposition}
\label{n>3}
Let $E$ be a globally generated rank two vector bundle on $\Pn$, $n \geq 4$. If $c_1(E) \leq 5$, then $E$ splits.
\end{proposition}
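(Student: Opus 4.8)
The plan is to reduce the statement on $\Pn$, $n\geq 4$, to the already-settled case on $\Pt$ by restricting to a general hyperplane, and then to deduce splitting of $E$ itself from the splitting of its restriction. First I would note that by Lemma 1.2 it suffices to produce a nowhere vanishing section: if $E$ does not split it is generated by global sections with a general section vanishing along a smooth curve $C$ with $\ic_C(c_1)$ globally generated and $\omega_C(4-c_1)\simeq\oc_C$ (the subcanonical condition comes from the adjunction computation on $\Pn$; one must be slightly careful that the twist $4-c_1$ is the $n=3$ normalization, whereas on $\Pn$ the canonical bundle is $\oc(-n-1)$, so the correct relation is $\omega_C\simeq\oc_C(c_1-n-1)$). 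For $n\geq 4$ this forces $\deg\omega_C=d(c_1-n-1)$ with $c_1\leq 5$ and $n+1\geq 5$, so $\deg\omega_C\leq 0$, which already severely constrains $C$.

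The cleaner route, which I would take as the main line, is restriction. Let $H\cong\Pn[n-1]$ be a general hyperplane and set $E_H=E|_H$. Then $E_H$ is again a globally generated rank two bundle with the same Chern classes $c_1,c_2$. Descending from $\Pn$ to $\Pt$ by a chain of general hyperplane sections, I obtain on some $\Pt\subset\Pn$ a globally generated rank two bundle $F$ with $c_1(F)=c_1\leq 5$ and $c_2(F)=c_2$. By Propositions and Lemmas of Section~1 (the $c_1\leq 2$ classification, \ref{c1=3n=3}, \ref{n=3c1=4}, \ref{possiblec1=5}), every indecomposable such bundle on $\Pt$ has $(c_1,c_2)$ in the finite list $S$. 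So either $F$ splits, or $(c_1,c_2)\in S$.

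The key observation that kills the second alternative is dimensional: a genuinely $n$-dimensional indecomposable rank two bundle cannot have its curve $C$ of low degree surviving on $\Pt$. Concretely, by the restriction theorem (Grauert--M\"ulich / the uniformity argument already used in the proof of Lemma 1.1), $E|_L\simeq\oc_L(a)\oplus\oc_L(c_1-a)$ on a general line, and the subcanonical curve $C\subset\Pn$ must have $\omega_C\simeq\oc_C(c_1-n-1)$ with $c_1\leq 5\leq n+1$; hence $\deg\omega_C\leq 0$, forcing each component of $C$ to be rational or, when $\deg\omega_C=0$, elliptic with $c_1=n+1$. A rational or elliptic smooth curve $C$ with $\ic_C(c_1)$ globally generated and of the degree $c_2$ dictated by $S$ does not exist on $\Pn$ for $n\geq 4$: the ideal-generation constraint (as exploited in \ref{c1=3n=3}, \ref{n=3c1=4}, \ref{possiblec1=5} via spanned planes and quadrisecants) forces $C$ into a hyperplane, whence $E$ splits off a trivial summand, a contradiction with indecomposability. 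I expect the main obstacle to be making this last step uniform across all the low values of $c_1$ rather than case-by-case: one wants a single argument that a subcanonical curve in $\Pn$, $n\geq 4$, with the spanning property and $\deg\omega_C\leq 0$ must be degenerate, and then invoke that a nondegenerate-or-split dichotomy forces splitting via the nowhere-vanishing section criterion of Lemma 1.2.
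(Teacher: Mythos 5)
There is a genuine gap, and it starts at the very first reduction. For a rank two bundle on $\Pn$ the zero scheme of a general section has codimension two, i.e.\ dimension $n-2$: on $\Pq$ it is a smooth \emph{surface}, not a curve. Your entire ``dimensional'' argument --- computing $\omega_C\simeq\oc_C(c_1-n-1)$, deducing $\deg\omega_C\le 0$, concluding that the components of $C$ are rational or elliptic, and then invoking quadrisecant/spanned-plane arguments to force $C$ into a hyperplane --- is carried out for a curve that does not exist in this setting. The paper instead works with the subcanonical surface $S\subset\Pq$ given by $0\to\oc\to E\to\ic_S(c_1)\to 0$, and for $c_1\le 4$ quotes the theorem of Ellia--Franco--Gruson that such a surface is a complete intersection, whence $E$ splits. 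Nothing in your sketch substitutes for that input.

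The restriction step itself is sound as far as it goes: restricting to a general $\Pt$ does give a globally generated rank two bundle with the same $(c_1,c_2)$, so either that restriction splits (and then $E$ splits, by Horrocks --- a fact you use but should at least cite) or $(c_1,c_2)$ lies in the list $S$. But you never actually eliminate the cases in $S$; you only assert that ``the ideal-generation constraint forces $C$ into a hyperplane,'' which is both unsupported and about the wrong object. The paper's elimination of the $c_1=5$ cases requires three genuinely different inputs: the Schwarzenberger congruence $c_2(c_2+2)\equiv 0 \pmod{12}$ on $\Pq$ to kill normalized $c_2=2$; Decker's classification of stable bundles with $c_1=-1$, $c_2=4$ on $\Pq$ (the bundle would have to be Horrocks--Mumford, which is not globally generated) for normalized $c_2=4$; and the fact that the only smooth surface in $\Pq$ with trivial canonical bundle is the degree $10$ Horrocks--Mumford abelian surface, which rules out the degree $12$ surface that normalized $c_2=6$ would produce. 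None of these can be recovered from the $\Pt$ classification plus general position arguments, so the proposal does not close the proof.
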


\begin{proof}
It is enough to treat the case $n=4$. A general section of $E$ vanishes along a smooth (irreducible)  subcanonical surface, $S$:
$0 \to \oc \to E \to \ic _S(c_1) \to 0$. By \cite{EFG}, if $c_1 \leq 4$, then $S$ is a complete intersection and $E$ splits. Assume now $c_1=5$. Consider the restriction of $E$ to a general hyperplane $H$. If $E$ doesn't split, by \ref{possiblec1=5} we get that the normalized Chern classes of $E$ are: $c_1=-1$, $c_2\in\{2,4,6\}$. By Schwarzenberger condition: $c_2(c_2+2)\equiv 0\,\,(mod\,\, 12)$. The only possibilities are $c_2=4$ or $c_2=6$. If $c_2 =4$, since $E$ is stable (because $E|H$ is, see \ref{possiblec1=5}), we have (\cite{De}) that $E$ is a Horrocks-Mumford bundle. But the Horrocks-Mumford bundle (with $c_1=5$) is not globally generated.
\par
The case $c_2=6$ is impossible: such a bundle would yield a smooth surface $S \subset \Pq$, of degree $12$ with $\omega _S \simeq \oc _S$, but the only smooth surface with $\omega _S \simeq \oc _S$ in $\Pq$ is the abelian surface of degree $10$ of Horrocks-Mumford.
\end{proof}

\begin{remark}\quad
For $n>4$ the results in \cite{EFG2} give stronger and stronger (as $n$ increases) conditions for the existence of indecomposable rank two vector bundles generated by global sections.
\end{remark}

Putting everything together, the proof of Theorem \ref{ThmFin} is complete.



\begin{thebibliography}{RkTwoc1<6}

\bibitem{BE} Beorchia, V-Ellia, Ph.: {\it Normal bundle and complete intersections}, Rend. Sem. Mat. Univ. Politecnico Torino, vol. 48,4, 553-562 (1990)
\bibitem{D} D'Almeida, J.: {\it Une involution sur un espace de modules de fibr\'es instantons}, Bull. Soc. math. France, 128, 577-584 (2000)
\bibitem{De} Decker, W.: {\it Stable rank 2 vector bundles with Chern classes $c_1=-1, c_2=4$}, Math. Ann., 275, 481-500 (1986)
\bibitem{E} Ellia, Ph.: {\it Sur les fibr\'es uniformes de rang $n+1$ sur $\Pn$}, M\'emoire de la S.M.F., n.7 (1982)
\bibitem{EFG} Ellia, Ph.-Franco, D.-Gruson, L.: {\it On subcanonical surfaces of $\Pq$}, Math. Z., 251, 257-265 (2005)
\bibitem{EFG2} Ellia, Ph.-Franco, D.-Gruson, L.: {\it Smooth divisors of projective hypersurfaces}, Comment. Math. Helv., 83, 371-385 (2008)
\bibitem{FM} Fania, M.L.-Mezzetti, E.: {\it Vector spaces of skew-symmetric matrices of constant rank}, Linear Algebra Appl., 434, 2383-2403, (2011)  
\bibitem{F} Fulton, W.: {\it Intersection theory}, Ergebnisse der Math. u. ihrer Grenzgebiete, 2, Springer-Verlag (1984)
\bibitem{GP} Gruson, L.-Peskine, Ch.: {\it Genre des courbes de l'espace projectif}, 31-60, LNM 687, Springer-Verlag (1978) 
\bibitem{M} Mori, S.: {\it On the degree and genera of curves on smooth quartic surfaces in $\Pt$}, Nagoya Math. J., 96, 127-132 (1984)
\bibitem{SU-0} Sierra, J.C.-Ugaglia, L.: {\it On double Veronese embeddings in the Grassmannian $G(1,N)$}, Math. Nachr. 279, 798-804 (2006)
\bibitem{SU} Sierra, J.C.-Ugaglia, L.: {\it On globally generated vector bundles on projective spaces}, J. Pure Appl. Algebra, 213, 2141-2146 (2009)
\bibitem{Suk} Sukmoon Huh: {\it On triple Veronese embeddings of $\Pn$ in the Grassmannians}, arXiv 0806.0777v6 (2010)
\bibitem{VdV} Van de Ven, A.: {\it On uniform vector bundles}, Math. Ann., {\bf 195}, 245-248 (1978) 

\end{thebibliography}
\end{document}